\def\co{\colon\thinspace}
\DeclareMathAlphabet{\mathsfsl}{OT1}{cmss}{m}{sl}
\newcommand{\tensor}[1]{\mathsfsl{#1}}
\newcommand{\spinc}{{\mathrm{Spin}^c}}
\newcommand{\spincz}{{\mathrm{Spin}^c_0}}
\theoremstyle{definition}
\newtheorem{thm}{Theorem}[section]
\newtheorem{prop}[thm]{Proposition}
\newtheorem{lem}[thm]{Lemma}
\newtheorem{remark}[thm]{Remark}
\newtheorem{defn}[thm]{Definition}
\newtheorem{cor}[thm]{Corollary}
\newtheorem{conj}[thm]{Conjecture}
\begin{document}

\title{Fintushel--Stern knot surgery in torus bundles}

\author{
{\Large Yi Ni}\\{\normalsize Department of Mathematics, Caltech, MC 253-37}\\
{\normalsize 1200 E California Blvd, Pasadena, CA
91125}\\{\small\it Email\/:\quad\rm yini@caltech.edu}
}


\date{}
\maketitle

\begin{abstract}
Suppose that $X$ is a torus bundle over a closed surface with homologically essential fibers. Let $X_K$ be the manifold obtained by Fintushel--Stern knot surgery on a fiber using a knot $K\subset S^3$. We prove that $X_K$ has a symplectic structure if and only if $K$ is a fibered knot. The proof uses Seiberg--Witten theory and a result of Friedl--Vidussi on twisted Alexander polynomials.
\end{abstract}

\section{Introduction}

One important question in $4$--dimensional topology is to determine which smooth closed $4$--manifolds admit symplectic structures. There are some topological constructions of symplectic $4$--manifolds. For example, Thurston \cite{ThSympl} showed that most surface bundles over surfaces are symplectic, and Gompf \cite{GS} generalized this result to Lefschetz fibrations. On the other hand, there are obvious obstructions to the existence of symplectic structures from algebraic topology. Moreover, Taubes' results \cite{Taubes1,Taubes2} provide more constraints in terms of the Seiberg--Witten invariants of the $4$--manifold. 

However, very little obstruction to the existence of symplectic structures is known besides the above mentioned ones. For example, given a symplectic manifold $X$, a symplectic torus $T\subset X$ with $[T]^2=0$,  and a knot $K
\subset S^3$, Fintushel and Stern \cite{FS} introduced a construction called knot surgery to get a new manifold $X_K$. They showed that $X_K$ is symplectic if $K$ is fibered, and $X_K$ can  often be proven to be non-symplectic when the Alexander polynomial of $K$ is not monic. (See Section~\ref{sect:Sympl} for more details.) However, if the Alexander polynomial of $K$ is monic, the obstruction from Seiberg--Witten theory does not exclude the possibility that $X_K$ has a symplectic structure. Nevertheless, one can mention the following folklore conjecture.

\begin{conj}\label{conj:FS}
Suppose that $X^4$ is a closed $4$--manifold admitting a Lefschetz fibration whose regular fibers are tori. Let $T\subset X$ be a regular fiber of the fibration, and suppose that $[T]\ne0$ in $H_2(X;\mathbb R)$. (Hence $X$ is symplectic by \cite{ThSympl}.) Let $X_K$ be a manifold obtained by Fintushel--Stern knot surgery on $T$ using a knot $K\subset S^3$. Then $X_K$ has a symplectic structure if and only if $K$ is a fibered knot.
\end{conj}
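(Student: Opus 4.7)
The plan is to address the two directions separately. The ``if'' direction is essentially due to Fintushel--Stern: when $K$ is fibered, $M_K = S^3 \setminus N(K)$ fibers over $S^1$, so $M_K \times S^1$ carries a symplectic form that matches along the boundary with a neighborhood of $T$ in $X$. This uses only that $T$ is a symplectically embedded torus of trivial normal bundle in a symplectic $4$-manifold, which holds for any regular fiber of the Lefschetz fibration since $X$ is itself symplectic by Gompf's theorem. So the real content is the converse.

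For the converse, suppose $X_K$ is symplectic. I would combine three ingredients: Taubes' constraints on Seiberg--Witten invariants of symplectic $4$-manifolds and of their finite covers (which inherit symplectic forms by pullback); a twisted Fintushel--Stern product formula that expresses SW data of such covers of $X_K$ in terms of SW data of a corresponding cover of $X$ and twisted Alexander polynomials of $K$; and the Friedl--Vidussi criterion cited in the abstract, which says $K$ is fibered iff every twisted Alexander polynomial $\Delta_K^\alpha$ arising from a representation $\alpha$ of $\pi_1(M_K)$ factoring through a finite group is monic of the expected degree.

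Concretely, for each representation $\alpha \co \pi_1(X_K) \to GL_n(\mathbb{C})$ factoring through a finite group, restrict to $\alpha_K \co \pi_1(M_K) \to GL_n(\mathbb{C})$. This restriction is subtle: the meridian $\mu$ of $K$ is nullhomologous in $X \setminus T \times D^2$ --- which follows from $[T] \ne 0$ and a Mayer--Vietoris argument using a class intersecting $T$ once --- so $\alpha(\mu)$ must lie in the commutator subgroup of the image of $\alpha$. The associated cover $\widetilde{X}_K$ is symplectic, so Taubes' theorem constrains its SW invariants; a twisted product formula should then equate the extremal SW contribution with the leading coefficient of $\Delta_K^{\alpha_K}$ times an extremal invariant of the corresponding cover of $X$, forcing $\Delta_K^{\alpha_K}$ to be monic for every such $\alpha$. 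Friedl--Vidussi then yield fiberedness of $K$.

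I see two main obstacles. First, establishing the twisted product formula in the full Lefschetz setting: the SW invariants of $X$ acquire extra support from the vanishing cycles at Lefschetz singularities, and one must show only the expected term contributes extremally after twisting. Passing to a branched cover that removes the Lefschetz singularities fails literally, since the monodromy around each singular fiber is a Dehn twist of infinite order, so I expect the extension must be carried out directly via a gluing formula for twisted Seiberg--Witten invariants on the decomposition $X_K = (X \setminus T \times D^2) \cup (M_K \times S^1)$. Second, and potentially decisive, is the case when $\pi_1(X)$ is small: for a simply connected Lefschetz fibration (e.g.\ many elliptic surfaces), $\pi_1(X_K)=1$, so there are no nontrivial finite-group representations and the twisted approach degenerates to the untwisted Fintushel--Stern formula, giving only that $\Delta_K$ is monic --- strictly weaker than fiberedness. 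Addressing this case appears to require genuinely new ideas, perhaps combining a Lefschetz-fibration structure theorem for the putative symplectic $X_K$ with Taubes' analysis of pseudoholomorphic curves, rather than a refinement of the twisted-Alexander approach that handles the torus-bundle case of this paper.
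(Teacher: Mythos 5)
The statement you are proving is the paper's Conjecture~\ref{conj:FS}, which the paper itself does not prove in full generality: it establishes only the torus-bundle case (Theorem~\ref{thm:Main}), and your closing remarks correctly identify why the general Lefschetz (in particular simply connected) case is out of reach for this method. Your overall strategy for the converse --- pull back the symplectic form to finite covers, apply Taubes' constraints, relate the Seiberg--Witten invariants of the covers to (twisted) Alexander polynomials via a gluing formula along $T^3$, and invoke Friedl--Vidussi --- is exactly the strategy of the paper. So as a roadmap for the tractable case your proposal is on target.

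However, there is a genuine gap in how you propose to produce the relevant covers, and it is precisely the point where the paper's main technical work lies. You plan to take representations $\alpha\co\pi_1(X_K)\to GL_n(\mathbb C)$ factoring through finite groups and restrict them to $\pi_1(M_K)$. But the Friedl--Vidussi input runs in the opposite direction: their vanishing theorem (Theorem~\ref{thm:FV}) hands you an epimorphism $\alpha\co\pi_1(N_0)\to G$ with $\Delta_{N_0,\phi}^{\alpha}=0$, and there is no reason this $\alpha$ extends over $\pi_1(X_K)$ --- the inclusion $\pi_1(N_0)\to\pi_1(X_K)$ kills the longitude and imposes further relations coming from $\pi_1(E)$, so restricting representations of $\pi_1(X_K)$ only yields the (possibly very small) family of $\alpha$'s that happen to extend, and ``monic for all extendable $\alpha$'' does not imply fiberedness. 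The paper circumvents this in Proposition~\ref{prop:Cover}: starting from an arbitrary finite quotient $\alpha$ of $\pi_1(N_0)$ with associated cover $\widetilde N_0$ (with $r$ preimage components of $K'$, each of degree $l$), it builds an $rl^3$--fold cover of $X_K$ \emph{by hand}, gluing $r$ copies of a cover $\widetilde E$ of $E$ (supplied by Proposition~\ref{prop:TorusCover}, which uses the torus-bundle structure) to $l$ copies of $S^1\times\widetilde N$ along the pattern of the complete bipartite graph $K_{r,l}$; this cover is in general irregular and does not come from any representation of $\pi_1(X_K)$. The $K_{r,l}$ retraction also supplies $b_1>4$ and hence $b_2^+>1$ (Lemmas~\ref{lem:b1} and \ref{lem:b2+}) and, via Bauer--Li, the nontorsionness of $c_1(\mathfrak k)$ --- hypotheses needed before Taubes applies, which your sketch does not address. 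Finally, note that the paper works with the \emph{vanishing} form of Friedl--Vidussi rather than the monicness criterion: it shows the full sum $\sum_{z\smile[\Omega]=n}SW_{\widetilde X_K}(z)$ is a coefficient of $\widetilde\psi_*(\Delta_{\widetilde N})=0$, contradicting Taubes, which avoids having to isolate an extremal term against the unknown Seiberg--Witten invariants of the cover of $E$.
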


As we remarked before, the ``if'' part of the above conjecture was proved by Fintushel and Stern. The most interesting case of Conjecture~\ref{conj:FS} is when $\pi_1(X\setminus T)$ (and hence $\pi_1(X)$ and $\pi_1(X_K)$) is trivial, as $X_K$ is then homeomorphic to $X$ by Freedman's theorem. In this case, the Lefschetz fibration of $X$ must contain singular fibers.
Our main result in this paper is the case of the above conjecture when $X$ is a genuine torus bundle, namely, there are no singular fibers in the Lefschetz fibration.

\begin{thm}\label{thm:Main}
Conjecture~\ref{conj:FS} is true when the Lefschetz fibration of $X^4$ is a torus bundle.
\end{thm}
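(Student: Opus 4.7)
The ``if'' direction is the theorem of Fintushel and Stern, so assume $X_K$ admits a symplectic structure and aim to conclude that $K$ is fibered. The overarching strategy is to feed the symplectic hypothesis into Friedl--Vidussi's criterion: $K\subset S^3$ is fibered if and only if every twisted Alexander polynomial $\Delta_K^\alpha$ associated to an epimorphism $\alpha\colon\pi_1(S^3\setminus\nu K)\twoheadrightarrow G$ onto a finite group $G$ is monic. The plan is therefore to produce enough monicness statements to apply this criterion.

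The first step is to use the torus bundle structure $p\colon X\to\Sigma$ to build useful finite covers of $X_K$. A finite cover $\tilde\Sigma\to\Sigma$ pulls back to a torus bundle $\tilde X\to\tilde\Sigma$, on which the preimage of the surgery fiber $T$ is a disjoint union of parallel copies of $T$. Performing Fintushel--Stern surgery on each of these with $K$, combined with a compatible cover of the knot complement factor $S^3\setminus\nu K$ dictated by a prescribed $\alpha\colon\pi_1(S^3\setminus\nu K)\to G$, produces a finite cover $\widetilde{X_K}\to X_K$ that is itself symplectic; by taking the base cover deep enough one arranges $b^+(\widetilde{X_K})>1$.

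The second step is to convert Taubes' Seiberg--Witten constraint on $\widetilde{X_K}$ into a monicness statement for $\Delta_K^\alpha$. The Fintushel--Stern product formula $\mathcal{SW}_{X_K}=\mathcal{SW}_X\cdot\Delta_K(t^{[T]})$ has a natural generalization to the covers $\widetilde{X_K}$: summing SW contributions of $\spinc$ structures lying above a fixed one downstairs yields an expression for $\mathcal{SW}_{\widetilde{X_K}}$ in which $\Delta_K$ is replaced by $\Delta_K^\alpha$. Taubes' theorem then forces the canonical-class coefficient of $\mathcal{SW}_{\widetilde{X_K}}$ to equal $\pm 1$, and the corresponding coefficient of $\mathcal{SW}_{\tilde X}$ is $\pm 1$ as well since $\tilde X$ is a symplectic torus bundle; comparing the two identifies $\Delta_K^\alpha$ as monic.

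The third and hardest step is to show that every $\alpha$ appearing in Friedl--Vidussi's criterion arises from a cover of $X_K$ of the above form. The issue is that an arbitrary $\alpha\colon\pi_1(S^3\setminus\nu K)\twoheadrightarrow G$ need not extend to a map from $\pi_1(X_K)$ to any finite group, so one has to perform a representation-theoretic manipulation. The standard remedy is a wreath product / induced representation trick: given $\alpha$, use coset representatives of $\pi_1(S^3\setminus\nu K)$ in $\pi_1(X_K)$ together with a suitable finite quotient $H$ of $\pi_1(X_K)$ to induce a homomorphism $\pi_1(X_K)\to G\wr H$ whose associated twisted polynomial controls the monicness of $\Delta_K^\alpha$. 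Executing this reduction carefully, and verifying the twisted Fintushel--Stern formula in step two with enough generality, is where the bulk of the work lies; the torus-bundle hypothesis enters precisely because it gives the flexibility of choosing base covers to build the representations required.
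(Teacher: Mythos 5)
Your overall strategy of using finite covers of $X_K$, Taubes' constraint, and Friedl--Vidussi is aligned with the paper, but the logical architecture you propose is genuinely different and, as stated, has significant gaps that the paper's argument deliberately sidesteps.

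The paper argues by contradiction using the \emph{vanishing} theorem of Friedl--Vidussi (Theorem~\ref{thm:FV}), not the monicness characterization. Assuming $K$ is not fibered, one obtains a single epimorphism $\alpha\colon\pi_1(N_0)\to G$ (note: on $\pi_1$ of the zero-surgery, not the knot complement) with $\Delta^{\alpha}_{N_0}=0$; for this one $\alpha$ a finite cover $\widetilde X_K$ is constructed, the Meng--Taubes theorem and the gluing formula along an essential $T^3$ show that the $n$th coefficient of $\underline{SW}_{\widetilde X_K}$ paired against $[\Omega]$ vanishes for the canonical value $n$, and this contradicts Taubes. Your forward plan instead aims to establish monicness of every $\Delta_K^{\alpha}$, which is a much heavier burden: you must sweep over all $\alpha$ on $\pi_1(S^3\setminus\nu K)$, and to extract a single $\pm1$ coefficient you must control the full product over all pieces in the gluing formula rather than simply observe that one factor is zero. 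Vanishing propagates through a product; monicness does not, unless you separately establish monicness of every other factor.

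The step you flag as ``hardest'' -- realizing arbitrary $\alpha$ via a wreath-product induction $\pi_1(X_K)\to G\wr H$ -- misdiagnoses where the difficulty lies. The covers in the paper are \emph{not} built from finite quotients of $\pi_1(X_K)$; they are built geometrically by choosing compatible finite covers of the two Mayer--Vietoris pieces $E$ and $S^1\times N_0$ and regluing along the preimage tori (Propositions~\ref{prop:TorusCover} and~\ref{prop:Cover}). This is why the positive-genus hypothesis on the base matters: it supplies an $l^3$-fold cover of the torus bundle on which the fiber cover is the characteristic $(l\mathbb Z)\times(l\mathbb Z)$ cover, so it can be matched against $q_l\times p_0$ on the $S^1\times N_0$ side. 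No extension of $\alpha$ to $\pi_1(X_K)$ is required (and no wreath-product trick), because one is free to use $r$ copies of $\widetilde E$ and $l$ copies of $S^1\times\widetilde N_0$ glued along a bipartite graph $K_{r,l}$. Finally, your plan quietly shifts between $\alpha$ on $\pi_1(S^3\setminus\nu K)$ (needed for the fiberedness criterion you cite) and $\alpha$ on $\pi_1(N_0)$ (which is what the gluing construction naturally accommodates, since one forms the fiber sum with $S^1\times N_0$); the paper avoids this mismatch by invoking Gabai to transfer non-fiberedness to $N_0$ and then applying the vanishing theorem directly to $N_0$.
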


Friedl and Vidussi \cite{FV_Ann} proved that a closed four-manifold $S^1\times N$ is symplectic if and only if $N$ is a surface bundle over $S^1$. Their result implies the special case of Theorem~\ref{thm:Main} when $X$ is a trivial torus bundle $T^2\times F=S^1\times(S^1\times F)$, where $F$ is a closed surface. Our proof uses a similar strategy as in \cite{FV_Ann}. Namely, If $X_K$ has a symplectic structure, then any finite cover of $X_K$ also has a symplectic structure. We can then use the constraints from Seiberg--Witten theory to study the existence of symplectic structures on finite covers of $X_K$. The Seiberg--Witten invariants of finite covers of $X_K$ can be expressed in terms of twisted Alexander polynomials of $K$. We then use a vanishing theorem for twisted Alexander polynomials due to Friedl--Vidussi \cite{FV_JEMS} to get our conclusion. Of course, this strategy works only if the fundamental group of the $4$--manifold we consider contains many finite index subgroups.

A major difference between \cite{FV_Ann} and our case is that any finite cover $\widetilde N\to N$ gives rise to a finite cover $S^1\times \widetilde N\to S^1\times N$, but the construction of finite covers of $X_K$ is not so obvious. The main technical part of this paper is devoted to constructing finite covers of $X_K$. We also need the full strength of the gluing theorem for Seiberg--Witten invariants along essential $T^3$ from \cite{TaubesTori}.


Throughout this paper, the manifolds we consider are all smooth and oriented. Suppose that $M$ is a submanifold of a manifold $N$, then $\nu(M)$ denotes a closed tubular neighborhood of $M$ in $N$, and $\nu^{\circ}(M)$ denotes the interior of $\nu(M)$.

This paper is organized as follows. In Section~\ref{sect:TwAlex} we will review the definition of twisted Alexander polynomials and state a vanishing theorem of Friedl--Vidussi \cite{FV_JEMS}.  In
Section~\ref{sect:SW} we will review the Seiberg--Witten invariants for $4$--manifolds with boundary consisting of copies of $T^3$, and state the gluing formula for Seiberg--Witten invariants when glued along essential tori. In Section~\ref{sect:Sympl} we will review several constructions of symplectic $4$--manifolds, and state the constraints on symplectic $4$--manifolds from Seiberg--Witten theory. In Section~\ref{sect:Cover}, we will analyze the topology of torus bundles and construct certain covers of $X_K$. Our main theorem will then be proved in Section~\ref{sect:Proof}.

\vspace{5pt}\noindent{\bf Acknowledgements.}\quad  We are grateful to Stefan Friedl, Tian-Jun Li and Stefano Vidussi for comments on an earlier version of this paper. The author was
partially supported by NSF grant numbers DMS-1103976, DMS-1252992, and an Alfred P. Sloan Research Fellowship.  


\section{Twisted Alexander polynomials}\label{sect:TwAlex}

Twisted Alexander polynomials were introduced by Xiao-Song Lin \cite{Lin} in 1990. Many authors \cite{JW,Wada,KL,Cha} have since generalized this invariant in various ways. We will follow the treatment in \cite{FV_AJM}.

Let $N$ be a compact $3$--manifold with $b_1(N)>0$,
 $$H=H(N)=H^2(N,\partial N)/\mathrm{Tors}\cong H_1(N)/\mathrm{Tors}.$$ Let $F$ be a free abelian group, and $\phi\in Hom(H,F)$. Then $\pi_1(N)$ acts on $F$ by translation via $\phi$. Let $\alpha\co\pi_1(N)\to GL(n,\mathbb Z)$ be a representation. Then there is an induced representation
$$\alpha\otimes\phi\co\pi_1(N)\to GL(n,\mathbb Z[F])$$ 
defined as follows. For $g\in\pi_1(N)$, $\alpha\otimes\phi(g)$ sends $\sum_{f\in F}a_ff\in (\mathbb Z[F])^n$ to $$\sum_{f\in F}(\alpha(g)(a_f))(f\phi(g)),$$
where each $a_f\in \mathbb Z^n$, and the elements in $F$ are written multiplicatively. Thus $(\mathbb Z[F])^n$ is a left $\mathbb Z[\pi_1(N)]$--module, whose left $\mathbb Z[\pi_1(N)]$ multiplication commutes with the right $\mathbb Z[F]$--module structure.

Let $\widetilde N$ be the universal cover of $N$, then $\pi_1(N)$ acts on the left of $\widetilde N$ as group of deck transformations. The chain group $C_*(\widetilde N)$ is a right $\mathbb Z[\pi_1(N)]$--module, with the right action defined via $\sigma\cdot g:=g^{-1}(\sigma)$. We can form the chain complex
$$C_*(\widetilde N)\otimes_{\mathbb Z[\pi_1(N)]}(\mathbb Z[F])^n,$$
and define $H_*(N;\alpha\otimes\phi)$ be its homology group, which is also a
$\mathbb Z[F]$--module. We call $H_1(N;\alpha\otimes\phi)$
the {\it (first) twisted Alexander module}.

Since $\mathbb Z[F]$ is Noetherian, $H_1(N;\alpha\otimes\phi)$ is a finitely generated module over $\mathbb Z[F]$. There exists a free resolution
$$
\begin{CD}
(\mathbb Z[F])^m@>f>>(\mathbb Z[F])^n@>>>H_1(N;\alpha\otimes\phi)@>>>0,
\end{CD}
$$
where $m,n$ are positive integers. We can always arrange that $m\ge n$. Let $\tensor A$ be an $n\times m$ matrix over $\mathbb Z[F]$ representing $f$.

\begin{defn}
The {\it twisted Alexander polynomial} of $(N,\alpha,\phi)$, denoted by
 $\Delta_{N,\phi}^{\alpha}$, is the greatest common divisor of all the $n\times n$ minors of $\tensor A$. It is well defined only up to multiplication by a unit in $\mathbb Z[F]$.
\end{defn}

When $F=H$ and $\phi$ is the identity on $H$, we simply write $\Delta_N^{\alpha}$. When $\alpha$ is the trivial representation to $GL(1,\mathbb Z)$, we omit the superscript $\alpha$. In particular, $\Delta_N\in \mathbb Z[H]$ is the usual Alexander polynomial of $N$. When $\alpha\co \pi_1(N)\to G$ is a representation into a finite group, we get an induced representation into $\mathrm{Aut}(\mathbb Z[G])$, which is denoted by $\alpha$ as well. In that case, the twisted Alexander polynomials are essentially determined by the untwisted Alexander polynomials of the covers of $N$ correponding to $\ker \alpha$. More precisely, we recall
\cite[Proposition~3.6]{FV_AJM}:

\begin{prop}[Friedl--Vidussi]\label{prop:AlexHom}
Let $N$ be a $3$--manifold with $b_1(N)>0$ and let $\alpha\co \pi_1(N)\to G$ be an epimorphism onto a finite group. Let $N_G$ be the covering space of $N$ corresponding to $\ker\alpha$. Let $\pi_*\co H(N_G)\to H(N)$ be the map induced by the covering map. Then $\Delta_N^{\alpha}$ and $\Delta_{N_G}$ satisfy the following relations:
\newline$\bullet$
If $b_1(N_G)>1$, then
$$\Delta_N^{\alpha}=\left\{
\begin{array}{ll}
\pi_*(\Delta_{N_G}), &\text{if }b_1(N)>1;\\
(a-1)^2\pi_*(\Delta_{N_G}), &\text{if }b_1(N)=1, \mathrm{im}\pi_*=\langle a\rangle.
\end{array}
\right.$$
$\bullet$ If $b_1(N_G)=1$, then $b_1(N)=1$ and
$$\Delta_N^{\alpha}=\pi_*(\Delta_{N_G}).$$
\end{prop}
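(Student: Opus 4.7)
The plan is to apply Shapiro's lemma to identify the twisted Alexander chain complex of $N$ with the $\pi_*$-base-change of the ordinary Alexander chain complex of $N_G$, and then compare $0$-th Fitting ideals. The key preliminary observation is that $(\mathbb Z[H(N)])^{|G|}$ equipped with the $\alpha\otimes\mathrm{id}_{H(N)}$ action is canonically isomorphic, as a $\mathbb Z[\pi_1(N)]$-module, to the induced module $\mathrm{Ind}_{\pi_1(N_G)}^{\pi_1(N)}\mathbb Z[H(N)]$, where $\pi_1(N_G)=\ker\alpha$ acts on $\mathbb Z[H(N)]$ via the composition $\pi_1(N_G)\to H(N_G)\xrightarrow{\pi_*}H(N)$; a direct computation using a choice of coset representatives exhibits this isomorphism. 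Shapiro's lemma at the chain level then yields
\[
C_*(\widetilde N)\otimes_{\mathbb Z[\pi_1(N)]}(\mathbb Z[H(N)])^{|G|}\;\cong\;C_*(N_G;\mathbb Z[H(N_G)])\otimes_{\mathbb Z[H(N_G)],\,\pi_*}\mathbb Z[H(N)],
\]
so that $H_1(N;\alpha\otimes\mathrm{id}_{H(N)})$ is the first homology of the $\pi_*$-base-change of the untwisted Alexander chain complex of $N_G$.

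In the easier regimes the conclusion is then formal. When $b_1(N)>1$, $\Delta_N^\alpha$ is simply the $0$-th Fitting ideal of the twisted Alexander module, and since Fitting ideals commute with base change of rings, $\Delta_N^\alpha=\pi_*(\Delta_{N_G})$ follows at once from the chain-level identification. When $b_1(N_G)=1$, the restriction of $\pi_1(N)\to H(N)$ to $\pi_1(N_G)$ factors through the rank-one group $H(N_G)$, which forces $b_1(N)=1$; here the Koszul resolution of $H_0(N_G;\mathbb Z[H(N_G)])=\mathbb Z$ over $\mathbb Z[s^{\pm 1}]$ is just $R\xrightarrow{s-1}R$ and base change introduces no Tor, so one still gets $\Delta_N^\alpha=\pi_*(\Delta_{N_G})$.

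The delicate case, and the main obstacle of the proof, is $b_1(N)=1<b_1(N_G)$, where the correction $(a-1)^2$ appears. Here the ring map $\pi_*:\mathbb Z[H(N_G)]\to\mathbb Z[H(N)]$ is not flat: the Koszul resolution of $\mathbb Z=H_0(N_G;\mathbb Z[H(N_G)])$ over the rank-$b_1(N_G)$ ring $\mathbb Z[H(N_G)]$ acquires nontrivial $\mathrm{Tor}_1$ after base change along $\pi_*$, and by the universal coefficient spectral sequence this Tor contributes to $H_1$ of the base-changed complex an extra factor whose order is $(a-1)$ (the generators of $H(N_G)$ all map into $\langle a\rangle\subset H(N)$). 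A second $(a-1)$ arises from the different $b_1$-conventions: for a closed $3$-manifold with $b_1=1$, the Alexander polynomial and the Reidemeister--Milnor torsion differ by an explicit power of $(t-1)$ reflecting the non-trivial $H_0$, whereas with $b_1>1$ they agree, so in passing from $N_G$ to $N$ via $\pi_*$ one picks up an extra factor of $(a-1)$. The cleanest way to organize this is to work entirely with Reidemeister--Milnor torsion, which behaves strictly functorially under $\pi_*$ by Shapiro's lemma and base-change invariance of torsion, and then convert between torsion and Alexander polynomial using the standard $b_1$-dependent formulas on each side; combining the two $(a-1)$-factors then yields $\Delta_N^\alpha=(a-1)^2\pi_*(\Delta_{N_G})$.
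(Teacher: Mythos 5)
The paper does not prove this statement; it simply quotes it as \cite[Proposition~3.6]{FV_AJM}, so there is no in-paper proof to compare against. Your proposal must therefore be judged on its own merits.

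Your overall strategy — Eckmann--Shapiro at the chain level to identify $C_*(\widetilde N)\otimes_{\mathbb Z[\pi_1(N)]}(\mathbb Z[H(N)])^{|G|}$ with the $\pi_*$-base-change of the Alexander chain complex of $N_G$, followed by a comparison of Fitting ideals — is the right one, and the identification of $(\mathbb Z[H(N)])^{|G|}$ with $\mathrm{Ind}_{\pi_1(N_G)}^{\pi_1(N)}\mathrm{Res}\,\mathbb Z[H(N)]$ via the projection formula is correct. The implication $b_1(N_G)=1\Rightarrow b_1(N)=1$ is also handled correctly (it follows from the transfer, or from your observation about finite-index images).

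The gap is in how you dispose of the ``easy regimes.'' You assert that when $b_1(N)>1$ the conclusion ``follows at once'' because Fitting ideals commute with base change. That property applies to base-changing a \emph{module}: $\mathrm{Fitt}_0(M\otimes_R S)=\pi_*(\mathrm{Fitt}_0(M))S$. It does \emph{not} say that $\mathrm{Fitt}_0\bigl(H_1(C_*\otimes_R S)\bigr)$ equals $\pi_*\bigl(\mathrm{Fitt}_0(H_1(C_*))\bigr)$, because $H_1(C_*\otimes_R S)\ne H_1(C_*)\otimes_R S$ unless $\pi_*$ is flat. Here $\pi_*\colon\mathbb Z[H(N_G)]\to\mathbb Z[H(N)]$ fails to be flat whenever $b_1(N_G)>b_1(N)$ — which includes many instances of the ``$b_1(N)>1$'' case — and the universal coefficient exact sequence injects $H_1(C_*)\otimes_R S$ into $H_1(C_*\otimes_R S)$ with cokernel $\mathrm{Tor}^R_1(\mathbb Z, S)$, which is generically nonzero. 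You acknowledge this very phenomenon under the heading of the ``delicate case,'' but the same issue is present in the case you called formal; it just happens that the contributions from $H_0$ and (by duality) $H_2$ have unit Fitting ideal when $b_1(N)>1$, whereas they contribute the two $(a-1)$ factors when $b_1(N)=1$. Your final paragraph has the cure: carry out the entire argument via Reidemeister (or Turaev) torsion, which is defined from the whole based free chain complex and genuinely transforms by $\pi_*$ under the Shapiro identification, and then translate torsion into Alexander polynomial with the $b_1$-- and boundary--dependent factors $\mathrm{Fitt}_0(H_0)\,\mathrm{Fitt}_0(H_2)$ on each side. That should be the uniform scheme from the start; once you adopt it, all four cases (including the ones you labeled formal) drop out of one calculation, and the ``Fitting ideals commute with base change'' shortcut can be deleted. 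As written, the proposal is not yet a proof: the easy cases are justified by an inapplicable lemma, and the hard case is only described impressionistically.
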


Given $\phi\in H^1(N)$, we say $\phi$ is {\it fibered} if $\phi$ is dual to a fiber of a fibration of $N$ over $S^1$.
A key ingredient in this paper is
the following vanishing theorem of Friedl--Vidussi
\cite{FV_JEMS} concerning non-fibered cohomology classes.

\begin{thm}[Friedl--Vidussi]\label{thm:FV}
Let $N$ be a compact, orientable, connected $3$--manifold with (possibly empty) boundary consisting of tori. If $\phi\in H^1(N)$ is not fibered, then there exsits an epimorphism $\alpha\co \pi_1(N)\to G$ onto a finite group $G$ such that
$\Delta_{N,\phi}^{\alpha}=0.$
\end{thm}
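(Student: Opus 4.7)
The plan is to prove the contrapositive: if $\Delta_{N,\phi}^{\alpha}\neq 0$ for every epimorphism $\alpha\co\pi_1(N)\to G$ with $G$ finite, then $\phi$ is fibered. Two pillars underlie the argument. The first is Sikorav's criterion: $\phi$ is fibered if and only if the Novikov homology $H_*(N;\mathbb Z\pi_1(N)^{\wedge}_\phi)$ vanishes, where the Novikov ring is the completion of $\mathbb Z\pi_1(N)$ with respect to $\phi$. The second is the virtual RFRS property for $3$--manifold groups due to Agol (together with Wise's work on virtual specialness for cubulated hyperbolic pieces and a separate argument for graph manifold pieces): there is a finite-index subgroup $H<\pi_1(N)$ admitting a cofinal tower $H=H_0\supset H_1\supset H_2\supset\cdots$ of finite-index normal subgroups with $\bigcap H_i=1$, with each $H/H_i$ abelian, and with each quotient map $H\to H/H_i$ factoring through $H_1(H;\mathbb Z)/\mathrm{tors}$.

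First I would recast the hypothesis using a $\phi$-twisted analogue of Proposition~\ref{prop:AlexHom}. Non-vanishing of $\Delta_{N,\phi}^{\alpha}$ for the epimorphism $\alpha$ cutting out the cover $N_i\to N$ associated to $H_i$ is equivalent, up to controllable correction factors, to non-vanishing of $\Delta_{N_i,\pi_i^*\phi}$. In particular, each ordinary Alexander module $H_1(N_i;\mathbb Z[F_i])$ is a torsion $\mathbb Z[F_i]$-module, where $F_i$ is the image of $\pi_i^*\phi$. The goal is then to show that torsionness of these Alexander modules at every level of the RFRS tower forces vanishing of the Novikov homology $H_1(N;\mathbb Z\pi_1(N)^{\wedge}_\phi)$, at which point Sikorav's theorem yields fiberedness of $\phi$ and contradicts the hypothesis.

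The main obstacle, and the technical core of \cite{FV_JEMS}, is precisely this passage from ``$\Delta\ne 0$ at every layer of the RFRS tower'' to vanishing of Novikov homology at the top. One has to exploit the RFRS nesting so that the non-fibered character $\phi$ eventually produces a non-trivial free summand in the Alexander module of some finite layer — contradicting the standing hypothesis — and simultaneously control the behavior of Novikov homology under inverse limits along the tower of abelian covers. I would expect most of the work to go into this step, together with a preliminary reduction from $N$ to its virtually RFRS finite cover, careful bookkeeping of how $\phi$ and its pullbacks sit inside the RFRS data, and an appeal to the identification between the Thurston norm on $H^1$ and the degree of appropriate twisted Alexander polynomials, which is what ultimately detects fiberedness at the level of these algebraic invariants.
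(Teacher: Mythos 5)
This theorem is not proved in the paper under review: it is simply cited from Friedl--Vidussi \cite{FV_JEMS} and used as a black box in the proof of the main theorem, so there is no ``paper's own proof'' to compare your sketch against. What you have written is instead a high-level reconstruction of Friedl--Vidussi's own argument.

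Your identification of the two pillars --- an algebraic criterion for fiberedness, plus Agol's virtual RFRS/specialness theorem supplying a cofinal tower of finite abelianizing covers along which to propagate the non-fibered character of $\phi$ --- does capture the genuine architecture of their proof, and the idea that non-fiberedness must eventually force vanishing of a twisted Alexander polynomial at some finite layer is the right crux. Two caveats. First, be careful with the precise algebraic pivot: I believe Friedl--Vidussi's published argument runs through the Thurston norm, Gabai's sutured decomposition theory, and McMullen-type degree inequalities for twisted Alexander polynomials rather than through Sikorav's Novikov homology criterion directly; torsion over the Novikov completion and vanishing of a twisted Alexander polynomial for a \emph{specific} finite cover are not interchangeable in any obvious way, and that mismatch is exactly where a proof along your suggested Novikov route would need extra work. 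Second, you candidly flag the passage ``$\Delta\ne 0$ at every RFRS layer $\Rightarrow$ fibered'' as the technical core of \cite{FV_JEMS} and then defer it entirely to that citation; that is precisely the stance the present paper takes, but as a proof of the theorem your text remains a statement of strategy rather than an argument, and the hard step is left as a gap.
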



\section{Seiberg--Witten invariants and gluing formula along essential tori}\label{sect:SW}

In this section, we will review the Seiberg--Witten theory for $4$--manifolds with boundary consisting of tori, and the gluing formula for cutting along essential tori. We will follow the treatment in \cite{TaubesTori}.

First, let us recall the usual Seiberg--Witten invariants for closed $4$--manifolds \cite{Witten}.
Given a closed, oriented, connected, smooth, $4$--manifold $X$ with $b_2^+(X)>0$, let $\spinc(X)$ be the set of Spin$^c$ structures on $X$.
One can define the Seiberg--Witten invariant $sw_X\co \spinc(X)\to \mathbb Z/\{\pm1\}$. The sign can be fixed by choosing an orientation on 
$$L_X=\Lambda^{\mathrm{top}}H^1(X;\mathbb R)\otimes\Lambda^{\mathrm{top}}H^{2+}(X;\mathbb R).$$
In order to construct $sw_X$, we need to start with a riemannian metric on $X$. It turns out that
$sw_X$ does not depend on the choice of the metric when $b_2^+(X)>1$. When $b_2^+(X)=1$, there are two chambers in the space of metrics corresponding to two orientations on $H_2^+(X;\mathbb R)$,
$sw_X$ only depends on the chamber the metric lies in.

From now on in this section, we assume that $X$ is a compact, oriented, connected, smooth $4$--manifold such that $\partial X$ is a (possibly empty) disjoint union of $T^3$, and there exists a cohomology class
$\varpi\in H^2(X;\mathbb R)$ whose pull-back is non-zero in the cohomology of each component of $\partial X$. When $\partial X=\emptyset$, we do not need such $\varpi$ to define $sw_X$, but we still assume the existence of $\varpi$ in order to state the gluing formula. Moreover, we assume $b_2^+(X)>0$ when $\partial X=\emptyset$.

Let $\spinc(X)$ be the set of Spin$^c$ structures on $X$, and $\spincz(X)\subset \spinc(X)$ be the subset consisting of $\mathfrak s$ such that the pull-back of $c_1(\mathfrak s)$ is zero in $H^2(\partial X)$. By the exact sequence 
$$
\begin{CD}
H^2(X,\partial X)@>\pi^*>> H^2(X)@>\iota^*>> H^2(\partial X),
\end{CD}
$$
if $\mathfrak s\in \spincz(X)$, then $c_1(\mathfrak s)\in H^2(X)$ is in the image of $\pi^*$. Let $$\spincz(X,\partial X)=\left\{(\mathfrak s, z)\left|\mathfrak s\in \spincz(X), z\in H^2(X,\partial X), \pi^*(z)=c_1(\mathfrak s)\right.\right\}.$$

One can define the relative Seiberg--Witten invariant 
$$sw_X\co \spincz(X,\partial X)\to\mathbb Z/\{\pm1\}.$$ The sign can be fixed by choosing an orientation on 
$$L_X=\Lambda^{\mathrm{top}}H^1(X,\partial X;\mathbb R)\otimes\Lambda^{\mathrm{top}}H^{2+}(X,\partial X;\mathbb R).$$
When $\partial X=\emptyset$, $sw_X$ is just the usual Seiberg--Witten invariant.
When $\partial X\ne\emptyset$, $sw_X$ is an invariant of the pair $(X,\varpi)$, and it is unchanged under continuous deformation of $\varpi$ in $H^2(X;\mathbb R)$ through classes with non-zero
restriction in the cohomology of each component of $\partial X$.

Suppose that $M\subset X$ is a $3$--torus such that the restriction of $\varpi$ to $H^2(M;\mathbb R)$ is nontrivial. 
We will consider the gluing formula for $sw$ when $X$ is cut open along $M$.
There are two cases. In the first case, $X$ is split by $M$ into two parts $X_1,X_2$. In the second case, $X_1=X\setminus\nu^{\circ}(M)$ is connected.

When $M$ is separating, there is a canonical isomorphism
\begin{equation}\label{eq:LX1}
L_X\cong L_{X_1}\otimes L_{X_2}.
\end{equation}
One can define a map
$$\wp\co\spincz(X_1,\partial X_1)\times \spincz(X_2,\partial X_2)\to\spincz(X,\partial X).$$

When $M$ is non-separating, 
there is a canonical isomorphism
\begin{equation}\label{eq:LX2}
L_X\cong L_{X_1}.
\end{equation}
One can define
$$\wp\co\spincz(X_1,\partial X_1)\to \spincz(X,\partial X).$$

In any case,
if $(\mathfrak s,z)\in\mathrm{im}\wp$, then $c_1(\mathfrak s)|_M=0$.

\begin{thm}[Taubes]\label{thm:Gluing1}
Let $M\subset X$ be a three-dimensional torus satisfying that the pull-back of $\varpi$ in $H^2(M;\mathbb R)$ is nontrivial. \newline
$\bullet$ If $M$ splits $X$ into two parts $X_1,X_2$, we orient $L_X$ using (\ref{eq:LX1}). Then
$$sw_X(\mathfrak s,z)=\sum_{((\mathfrak s_1,z_1),(\mathfrak s_2,z_2))\in\wp^{-1}(\mathfrak s,z)}sw_{X_1}(\mathfrak s_1,z_1)sw_{X_2}(\mathfrak s_2,z_2).$$
$\bullet$ If $M$ does not split $X$, let $X_1=X\setminus\nu^{\circ}(M)$, we orient $L_X$ using (\ref{eq:LX2}). Then
$$sw_X(\mathfrak s,z)=\sum_{(\mathfrak s_1,z_1)\in\wp^{-1}(\mathfrak s,z)}sw_{X_1}(\mathfrak s_1,z_1).$$
\end{thm}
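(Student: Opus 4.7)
The plan is to prove this by the standard neck-stretching argument. Choose a metric on $X$ that, on a collar of $M$, is isometric to $[-T,T] \times T^3$ with $T^3$ carrying a flat metric. As $T \to \infty$ the resulting manifold $X(T)$ degenerates: in the separating case into the disjoint union of cylindrical-end manifolds $X_1^\infty$ and $X_2^\infty$; in the non-separating case into a single $X_1^\infty$ carrying two cylindrical ends modelled on $\mathbb R_{\pm}\times T^3$. The relative invariants $sw_{X_i}$ are then defined by counting perturbed Seiberg--Witten monopoles on $X_i^\infty$ satisfying prescribed exponential decay into one of the critical points on the end, and the task is to identify monopoles on $X(T)$ with matched pairs on the pieces.

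First I would analyze the three-dimensional Seiberg--Witten equations on $T^3$, with a non-vanishing two-form perturbation supplied by the restriction of $\varpi$. On $T^3$ with a flat metric these equations can be described very explicitly; the key outcome of Taubes' calculation is that when $\varpi|_M \neq 0$ the perturbed $3$--dimensional moduli space is discrete, transversely cut out, and free of reducibles. These isolated critical points are the only possible asymptotic limits of finite-energy monopoles on the ends. A necessary cohomological condition for a $\spinc$ structure on $T^3$ to admit such a critical point is that its first Chern class vanish, which accounts for the constraint $c_1(\mathfrak s)|_M = 0$ inside $\mathrm{im}\,\wp$ and for working with the subset $\spincz$ rather than all of $\spinc$.

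Next I would establish the bijective correspondence between monopoles on $X(T)$ (for $T$ large) and compatible pairs on the pieces. The three ingredients are: (i) compactness together with exponential decay, showing that any sequence of monopoles on $X(T)$ with $T\to\infty$ subconverges on compact sets to a finite-energy monopole on each piece which decays exponentially to one of the $T^3$ critical points on each end; (ii) a surjective gluing theorem producing a monopole on $X(T)$ from any pair of monopoles on the pieces that share the same asymptotic critical point across the neck; and (iii) an orientation computation that identifies the signs using the canonical isomorphisms (\ref{eq:LX1}), (\ref{eq:LX2}) of determinant line bundles. Summing over matched pairs $((\mathfrak s_1,z_1),(\mathfrak s_2,z_2)) \in \wp^{-1}(\mathfrak s,z)$ yields the separating formula; in the non-separating case the ``pair'' collapses to a single monopole on $X_1^\infty$ required to converge to the \emph{same} critical point on both ends.

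The main obstacle is the noncompact analysis on the cylindrical ends: proving exponential convergence of finite-energy monopoles to $T^3$--critical points, ruling out energy loss or bubbling into the neck, and proving surjectivity of the gluing construction. The nontriviality of $\varpi|_M$ is essential at every step, since it is what rules out reducibles on the neck, guarantees transversality of the $3$--dimensional moduli space on $T^3$, and ensures that the count of relative monopoles is invariant under continuous deformations of $\varpi$ through classes non-zero on each boundary component.
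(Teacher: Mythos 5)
This theorem is not proved in the paper: it is quoted verbatim from Taubes \cite{TaubesTori} (and indeed the paper attributes it explicitly to Taubes with no argument supplied). So there is no in-paper proof to compare against. Your outline does correctly describe the strategy Taubes actually uses: stretch a flat cylindrical neck $[-T,T]\times T^3$, identify the asymptotic limits of finite-energy monopoles with the perturbed critical points on $T^3$, observe that a nonzero restriction of $\varpi$ forces those critical points to be isolated, nondegenerate, and irreducible (and forces $c_1|_M = 0$, explaining $\spincz$), then match monopoles on $X(T)$ with compatible tuples on the pieces via a compactness-plus-gluing argument, with signs governed by (\ref{eq:LX1})/(\ref{eq:LX2}).

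That said, treat your paragraph as a roadmap, not a proof. What it leaves unaddressed is precisely where the real content of \cite{TaubesTori} lies: (i) the detailed classification of the perturbed $T^3$ critical points and the spectral analysis needed for exponential decay on the ends; (ii) the exclusion of energy drift into the neck, which requires the global energy identities and the curvature bounds Taubes establishes; (iii) the construction and surjectivity of the gluing map, including the Fredholm and obstruction-bundle analysis; (iv) the precise definition of $\wp$ on the level of relative $\spinc$ data $(\mathfrak s_i,z_i)$ and the verification that the orientation conventions on $L_{X_i}$ genuinely compose to the chosen orientation on $L_X$; and (v) the $b_2^+=1$ chamber issues, which Taubes has to track carefully through the stretching. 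None of these are trivial, and the non-separating case has the additional wrinkle that the two ends of $X_1^\infty$ must be matched to each other, which interacts with the gauge group and with $H^1$ in a way your sketch elides. In short: the approach is the right one and agrees with the source, but as a standalone argument it is not a proof, and the paper never intended it to be one.
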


Theorem~\ref{thm:Gluing1} implies the more general case of the gluing formula when we cut $X$ open along more than one tori. Suppose that $M=M_1\cup M_2\cup\cdots\cup M_m$ is a disjoint union of 
three-dimensional tori in $X$ such that the restriction of $\varpi$ to $H^2(M_i;\mathbb R)$ is nontrivial for every $i$. Let $X_1,\dots,X_n$ be the components of $X\setminus\nu^{\circ}(M)$. 
Let $\mathcal G$ be the graph with vertices $v_1,\dots,v_n$ and edges $e_1,\dots,e_m$. The incidence relation in $\mathcal G$ is as follows: if $M_k$ is adjacent to $X_i$ and $X_j$, the edge $e_k$ connects $v_i$ and $v_j$.
Let $\mathcal T$ be a spanning tree of $\mathcal G$, then $\mathcal T$ has exactly $n-1$ edges. Without loss of generality, we may assume the edges in $\mathcal G\setminus\mathcal T$ are $e_1,\dots,e_{m-n+1}$. We consider a sequence of manifolds $X^{(i)}$, $i=0,\dots,m$:
$$X^{(0)}=X,\quad X^{(i)}=X^{(i-1)}\setminus\nu^{\circ}(M_i), i>0.$$
Clearly, $M_i$ is non-separating in $X^{(i-1)}$ when $1\le i\le m-n+1$, and $M_i$ is separating in $X^{(i-1)}$ when $m-n+2\le i\le m$. Thus we can apply Theorem~\ref{thm:Gluing1} inductively to get the gluing formula when we cut open along $M$.

More precisely, applying (\ref{eq:LX2}) and (\ref{eq:LX1}) consecutively, we get a canonical isomorphism
$$
L_X\cong \bigotimes_{i=1}^n L_{X_i},
$$
which will be used to orient $L_X$.
We can also define a map
$$\wp\co\prod_{i=1}^n\spincz(X_i,\partial X_i)\to\spincz(X,\partial X).$$

\begin{thm}\label{thm:Gluing2}
Under the above settings, we have
$$sw_X(\mathfrak s,z)=\sum_{((\mathfrak s_1,z_1),\dots,(\mathfrak s_n,z_n))\in\wp^{-1}(\mathfrak s,z)}\prod_{i=1}^nsw_{X_i}(\mathfrak s_i,z_i).$$
\end{thm}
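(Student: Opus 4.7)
The plan is to prove Theorem~\ref{thm:Gluing2} by induction on the number of tori $m$, cutting them one at a time in the order $M_1,\dots,M_m$ already fixed in the excerpt and applying Theorem~\ref{thm:Gluing1} at each step. The spanning tree $\mathcal T$ is chosen precisely so that for $1\le i\le m-n+1$ the tori $M_i$ correspond to non-tree edges: then $\mathcal T$ remains a spanning tree of the dual graph of $X^{(i-1)}$, so $X^{(i-1)}$ is connected and $M_i$ is non-separating. For $m-n+2\le i\le m$ the $M_i$ are tree edges, and removing such an edge disconnects the shrinking spanning tree, so $M_i$ is separating in $X^{(i-1)}$. The restriction of $\varpi$ to each $M_i$ is nontrivial by hypothesis, so Theorem~\ref{thm:Gluing1} applies at every step.

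First I would compose the canonical isomorphisms (\ref{eq:LX2}) at the non-separating cuts with the isomorphisms (\ref{eq:LX1}) at the separating cuts to produce the asserted identification $L_X\cong\bigotimes_{i=1}^n L_{X_i}$, using this to orient $L_X$. The analogous composition of the one-step maps on Spin$^c$ structures assembles the global $\wp\co \prod_i\spincz(X_i,\partial X_i)\to\spincz(X,\partial X)$. Applying Theorem~\ref{thm:Gluing1} iteratively, at step $i$ the invariant $sw_{X^{(i-1)}}$ is rewritten as a sum over compatible refinements of products of relative invariants on the components of $X^{(i)}$; substituting these refinements recursively collapses the nested sums into a single sum, over all admissible $n$-tuples $((\mathfrak s_1,z_1),\dots,(\mathfrak s_n,z_n))\in\wp^{-1}(\mathfrak s,z)$, of the products $\prod_{i=1}^n sw_{X_i}(\mathfrak s_i,z_i)$.

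The main obstacle will be the bookkeeping of relative Spin$^c$ data and orientations. I need to verify that iterated preimages under the one-step $\wp$ maps correspond bijectively to $\wp^{-1}(\mathfrak s,z)$ as described in the theorem, which entails tracing the exact sequences $H^2(X^{(i)},\partial X^{(i)})\to H^2(X^{(i)})\to H^2(\partial X^{(i)})$ through each cut and checking that the relative cohomology constraints $\pi^*(z_i)=c_1(\mathfrak s_i)$ on the pieces assemble consistently to the corresponding constraint on $X$. I also need to confirm that the composite identification of $L_X$ with $\bigotimes_i L_{X_i}$ is independent of the chosen spanning tree and of the ordering of cuts within the tree and non-tree classes, which should follow from naturality of the underlying Mayer--Vietoris long exact sequences and commutativity of the two types of cuts whenever their supporting tori are disjoint.
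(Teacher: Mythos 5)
Your proposal follows exactly the inductive route the paper sketches in the paragraph preceding the theorem: cut along the non-tree tori first (applying the non-separating case of Theorem~\ref{thm:Gluing1}, since the spanning tree survives each such cut), then along the tree tori (separating case, since removing a tree edge disconnects a forest), and compose the one-step identifications from~(\ref{eq:LX2}) and~(\ref{eq:LX1}) to orient $L_X$ and assemble the global map $\wp$. The paper leaves the bookkeeping of relative Spin$^c$ data and the separating/non-separating dichotomy as ``clear,'' so your elaboration of these points is the same argument spelled out in more detail.
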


In practice, it is more convenient to consider the following version of Seiberg--Witten invariant:
$$SW_X\co H^2(X,\partial X)\to \mathbb Z$$
defined by letting
$$SW_X(z)=\sum_{(\mathfrak s,z)\in\spincz(X,\partial X)}sw_X(\mathfrak s,z).$$

Let $$\rho_i\co H^2(X_i,\partial X_i)\to H^2(X,\partial X), \quad i=1,\dots,n$$ be the natural maps, and 
$$\rho=\rho_1+\cdots+\rho_n\co \bigoplus_{i=1}^nH^2(X_i,\partial X_i)\to H^2(X,\partial X).$$

Then Theorem~\ref{thm:Gluing2} implies
\begin{thm}\label{thm:Gluing3}
Under the condition of Theorem~\ref{thm:Gluing2}, we have
$$SW_X(z)=\sum_{(z_1,\dots,z_n)\in\rho^{-1}(z)}\prod_{i=1}^nSW_{X_i}(z_i).$$
\end{thm}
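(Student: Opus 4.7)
The plan is to derive Theorem~\ref{thm:Gluing3} directly from Theorem~\ref{thm:Gluing2} by summing both sides over all Spin$^c$ structures $\mathfrak{s}$ with $(\mathfrak{s},z)\in\spincz(X,\partial X)$. Unfolding the definition $SW_X(z)=\sum_{\mathfrak{s}}sw_X(\mathfrak{s},z)$ and substituting the formula from Theorem~\ref{thm:Gluing2} into each term yields a double sum over pairs $\bigl(\mathfrak{s},((\mathfrak{s}_i,z_i))_{i=1}^n\bigr)$ subject to $\wp((\mathfrak{s}_i,z_i)_i)=(\mathfrak{s},z)$.

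The main step is to interchange the order of summation. The key observation is that the gluing map $\wp$ covers the addition map $\rho$ at the level of relative cohomology: by construction, the image of a tuple $((\mathfrak{s}_i,z_i))_{i}$ under $\wp$ has relative class $\sum_i\rho_i(z_i)=\rho(z_1,\ldots,z_n)$. Consequently, after swapping the sums, the combined index set is the disjoint union, over tuples $(z_1,\ldots,z_n)\in\rho^{-1}(z)$, of the collections of tuples $(\mathfrak{s}_1,\ldots,\mathfrak{s}_n)$ with each $(\mathfrak{s}_i,z_i)\in\spincz(X_i,\partial X_i)$. The inner sum then factors as $\prod_i\sum_{\mathfrak{s}_i}sw_{X_i}(\mathfrak{s}_i,z_i)=\prod_i SW_{X_i}(z_i)$ by the definition of $SW_{X_i}$, which produces the desired identity.

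The only point that requires care is verifying that every tuple of pairs with the prescribed relative classes really lies in the domain of $\wp$, so that no compatibility constraint is silently dropped in the interchange. This follows from the defining condition for $\spincz$: the vanishing of $c_1(\mathfrak{s}_i)$ on each boundary torus pins down the Spin$^c$ structure on each $T^3$-component of $\partial X_i$ uniquely, so the local gluings asserted in Theorem~\ref{thm:Gluing1} always succeed on the nose. I expect no substantial obstacle beyond this totality check, which is built into the framework recalled in Section~\ref{sect:SW}; the rest of the argument is a routine reindexing.
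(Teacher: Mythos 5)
Your proof is correct and is exactly the reindexing that the paper has in mind when it says ``Theorem~\ref{thm:Gluing2} implies'' Theorem~\ref{thm:Gluing3}: sum the formula of Theorem~\ref{thm:Gluing2} over $\mathfrak{s}$, use that $\wp$ covers $\rho$ on relative classes together with the uniqueness of the torsion Spin$^c$ structure on $T^3$ to re-fiber the index set over $\rho^{-1}(z)$, and factor the inner sum. No substantive difference from the paper's (unstated) argument.
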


It is often convenient to represent the Seiberg--Witten invariants in the following more compact form.

Let $H(X)=H^2(X,\partial X)/\mathrm{Tors}$.
Given $z\in H^2(X,\partial X)$,
let $[z]\in H(X)$ be the reduction of $z$.
We define
$$\underline{SW}_X=\sum_{z\in H^2(X,\partial X)}SW_X(z)[z],$$
which lies either in $\mathbb Z[H(X)]$, or, in certain cases, an extension of this group ring which allows semi-infinite power series.

For example, let $t\in H(D^2\times T^2)$ be the Poincar\'e dual to the fundamental class of the torus, then
\begin{equation}\label{eq:D2T2}
\underline{SW}_{D^2\times T^2}=\frac{t}{1-t^2}=t+t^3+\cdots.
\end{equation}

The invariant $\underline{SW}_X$ is related to the Alexander polynomial of a $3$--manifold.
Let
$N$ be a compact, oriented, connected $3$--manifold with $b_1(N)>0$ such that $\partial N$ is a (possibly empty) disjoint union of $T^2$.
Let $p^*\co H^2(N,\partial N)\to H^2(S^1\times N,\partial(S^1\times  N))$ be the map on cohomology induced by the projection $p\co S^1\times N\to N$. Let $$\Phi_2\co \mathbb Z[H(N)]\to \mathbb Z[H(S^1\times N)]$$ be the map induced by $2p^*$. Meng and Taubes \cite{MT} proved the following theorem.

\begin{thm}[Meng--Taubes]\label{thm:MT}
Let $N$ be a compact, oriented, connected $3$--manifold with $b_1(N)>0$ such that $\partial N$ is a (possibly empty) disjoint union of $T^2$. When $b_1(N)=1$, let $t$ be a generator of $H(N)\cong\mathbb Z$, and let $|\partial N|=0$ or $1$ be the number of boundary components of $\partial N$.  
Then, there exits an element $\xi\in\pm p^*( H(N))$, such that
$$\underline{SW}_{S^1\times N}=
\left\{\begin{array}{ll}
\xi\Phi_2(\Delta_N), &\text{if }b_1(N)>1;\\
\xi\Phi_2((1-t)^{|\partial N|-2}\Delta_N), &\text{if }b_1(N)=1.
\end{array}\right.
$$
\end{thm}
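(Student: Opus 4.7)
The plan is to carry out the original Meng--Taubes strategy: reduce four-dimensional Seiberg--Witten theory on $S^1\times N$ to a three-dimensional invariant of $N$, and then identify the latter with the Reidemeister--Turaev torsion of $N$.

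For the dimensional reduction step I would equip $S^1\times N$ with an $S^1$-invariant metric and perturb the Seiberg--Witten equations by an $S^1$-invariant two-form whose class restricts nontrivially to each boundary $T^3$. The contributing Spin$^c$ structures are exactly those whose first Chern class lies in $p^*H^2(N,\partial N)$, which explains the appearance of $p^*$ in the formula; the doubling in $\Phi_2=2p^*$ reflects the fact that the cohomological label records $c_1(\mathfrak s)$ rather than the Spin$^c$ shift. An averaging argument then forces every irreducible solution to be $S^1$-invariant, and such solutions correspond to three-dimensional monopoles on $N$ satisfying a vortex-like equation. Thus $\underline{SW}_{S^1\times N}$ reorganizes into the generating function of a three-dimensional count on $N$, viewed as an element of (a suitable completion of) $\mathbb{Z}[H(N)]$ pulled up by $\Phi_2$.

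Next I would identify this three-dimensional count with the Turaev torsion $\tau(N)$ up to multiplication by a unit in $\pm H(N)$. Both invariants satisfy identical surgery formulas for drilling or filling an embedded curve, and both can be computed directly on the building blocks $D^2\times T^2$, where the monopole calculation produces (\ref{eq:D2T2}), and $S^1\times\Sigma_g$. One then decomposes an arbitrary $N$ into such pieces by a sequence of surgeries, checks that the two invariants transform identically at each step, and, when $b_1(N)=1$, fixes the chamber by stretching the metric along a torus on which $\varpi$ restricts nontrivially. The classical Milnor identity then gives $\tau(N)=\Delta_N$ up to units when $b_1(N)>1$ and $\tau(N)=(1-t)^{|\partial N|-2}\Delta_N$ when $b_1(N)=1$, and absorbing the overall torsion sign and basepoint ambiguity into $\xi\in\pm p^*(H(N))$ yields the stated formula.

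The main obstacle is the middle step, matching the three-dimensional monopole invariant with the Turaev torsion. Beyond the combinatorics of the surgery formulas one must control reducible solutions and wall-crossing (especially when $b_1(N)=1$), set up appropriate Coulomb-type boundary conditions on the toral ends, and reconcile the gauge-theoretic orientation on $L_{S^1\times N}$ with the intrinsic sign of the torsion so that the final answer really is a unit multiple of the stated form rather than merely proportional to it.
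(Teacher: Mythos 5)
This is a background theorem that the paper cites to the literature rather than proves: the text simply says ``Meng and Taubes \cite{MT} proved the following theorem,'' and no argument is given. So there is no proof in the paper to compare your proposal against, and your task was genuinely to reconstruct the Meng--Taubes argument from scratch.

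Your sketch does capture the correct overall strategy of the original Meng--Taubes paper: dimensional reduction of the four-dimensional Seiberg--Witten equations on $S^1\times N$ to a three-dimensional monopole count on $N$, followed by identification of that count with Reidemeister--Turaev torsion via matching surgery formulas, and then invocation of the Milnor/Turaev identity to rewrite the torsion as $(1-t)^{|\partial N|-2}\Delta_N$ when $b_1(N)=1$ and as $\Delta_N$ when $b_1(N)>1$. Your accounting of $\Phi_2 = 2p^*$ as the passage from the $\mathrm{Spin}^c$ affine structure to the $c_1$ labeling is also correct. Two cautionary remarks, though. First, the statement as given in this paper is not quite the 1996 Meng--Taubes theorem: here $N$ is allowed nonempty torus boundary, so $S^1\times N$ has $T^3$ boundary, and the left-hand side $\underline{SW}_{S^1\times N}$ refers to Taubes' \emph{relative} Seiberg--Witten invariant from \cite{TaubesTori}; your dimensional-reduction step would need to be set up with the asymptotic/perturbation data of that relative theory rather than the closed theory. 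Second, you correctly flag the hardest gap yourself --- the identification of the three-dimensional monopole count with Turaev torsion, together with the control of reducibles, chambers, and orientations --- and that is indeed where all the substance lies. As a roadmap your sketch is sound, but it is a roadmap rather than a proof, and none of those verifications would be routine to carry out.
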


As a corollary, we prove the following gluing result for the Alexander polynomial.

\begin{cor}\label{cor:AlexGlue}
Let $N$ be as in Theorem~\ref{thm:MT}, and $K\subset N$ be a knot such that $[K]$ is nontorsion. Let $\kappa\in H[N]$ be the coset of the Poincar\'e dual of $[K]$. Let $M=N\setminus\nu^{\circ}(K)$, and let \[\pi^*\co H^2(M,\partial M)\cong H^2(N,\nu(K)\cup\partial N)\to H^2(N,\partial N)\] be the natural map induced by the inclusion $(N,\partial N)\subset (N,\nu(K)\cup\partial N)$. We also use $\pi^*$ to denote the induced map $\mathbb Z[H(M)]\to \mathbb Z[H(N)]$. Then there exists an element $\xi\in \pm H(N)$, such that
$$\Delta_{N}=\left\{\begin{array}{ll}
\xi\pi^*(\Delta_M), &\text{if }b_1(N)=1;\\
\xi(1-\kappa)^{-1}\pi^*(\Delta_M), &\text{if }b_1(N)>1.
\end{array}\right.$$
\end{cor}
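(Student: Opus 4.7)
The strategy is to cross $N$ with $S^1$, apply Theorem~\ref{thm:Gluing3} to the $3$--torus $T=S^1\times\partial\nu(K)\subset S^1\times N$ (which separates $S^1\times N$ into $S^1\times M$ and $S^1\times\nu(K)\cong D^2\times T^2$), and translate the resulting Seiberg--Witten identity back to Alexander polynomials via Theorem~\ref{thm:MT}.

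The hypothesis of Theorem~\ref{thm:Gluing3} is verified by choosing $\omega\in H^1(N;\mathbb R)$ with $\omega([K])\neq 0$ (possible since $[K]$ is nontorsion) and setting $\varpi:=p^*\omega\smile q^*dt\in H^2(S^1\times N;\mathbb R)$, where $p,q$ denote the projections to $N$ and $S^1$; a Künneth check shows this $\varpi$ restricts nontrivially to every component of $T$. Under the identification $S^1\times\nu(K)\cong D^2\times T^2$ sending the core $S^1\times K$ to $\{0\}\times T^2$, the map $\rho_2$ carries the generator $t\in H(D^2\times T^2)$ to $p_N^*(\kappa)\in H(S^1\times N)$, and hence $p_N^*(\kappa)^2=\Phi_2(\kappa)$. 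Combining Theorem~\ref{thm:Gluing3} with (\ref{eq:D2T2}) gives
\[
\underline{SW}_{S^1\times N}\;=\;\rho_1\bigl(\underline{SW}_{S^1\times M}\bigr)\cdot\frac{p_N^*(\kappa)}{\Phi_2(1-\kappa)}.
\]

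Next I substitute Meng--Taubes on both sides, using the commutation $\rho_1\circ\Phi_2^M=\Phi_2^N\circ\pi^*$ obtained from the commutative square of inclusions and projections. For the case $b_1(N)>1$, the long exact sequence of $(N,M)$ together with non-triviality of $[K]$ forces $b_1(M)>1$, so neither application of Meng--Taubes carries a $(1-t)^{|\partial|-2}$ correction. The resulting identity in $\mathbb Z[H(S^1\times N)]$ is
\[
\Phi_2\bigl((1-\kappa)\Delta_N\bigr)\;=\;\eta\cdot\Phi_2\bigl(\pi^*\Delta_M\bigr),
\]
where $\eta\in\pm\,p_N^*(H(N))$ is the unit $\pm p_N^*(h)\cdot p_N^*(\kappa)$ collected from the Meng--Taubes constants and the extra $p_N^*(\kappa)$ factor. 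A parity comparison---the left side, as well as $\Phi_2(\pi^*\Delta_M)$, is supported in even exponents of the generators of $p_N^*(H(N))$---forces $\eta\in\pm\Phi_2(H(N))$, so $\eta=\Phi_2(\xi)$ for some $\xi\in\pm H(N)$, and injectivity of $\Phi_2$ then yields $(1-\kappa)\Delta_N=\xi\,\pi^*\Delta_M$.

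The case $b_1(N)=1$ proceeds analogously but must track the correction factors $(1-t_N)^{|\partial N|-2}$ and $(1-t_M)^{|\partial M|-2}$ from Theorem~\ref{thm:MT}. Using $|\partial M|=|\partial N|+1$ and the explicit relation between $\pi^*(t_M)$, $t_N$, and $\kappa$ inside $H(N)$ (read off from the Mayer--Vietoris for $N=M\cup\nu(K)$), the ratio of these correction factors produces one extra $(1-\kappa)$ that cancels the denominator from the $D^2\times T^2$ contribution, leaving $\Delta_N=\xi\,\pi^*\Delta_M$. I expect the main obstacle to be exactly this bookkeeping in the $b_1(N)=1$ case---especially the sub-case $b_1(M)=1$, where $\pi^*\colon H(M)\to H(N)$ need not be an isomorphism---and the accompanying parity verification that places the emergent unit inside $\Phi_2(H(N))$, so that injectivity of $\Phi_2$ lets us descend to an equality in $\mathbb Z[H(N)]$.
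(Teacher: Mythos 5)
Your proof takes essentially the same approach as the paper's: cross with $S^1$, cut $S^1\times N$ along the essential $3$--torus $S^1\times\partial\nu(K)$, apply Taubes's gluing formula (Theorem~\ref{thm:Gluing3}) together with the $D^2\times T^2$ computation (\ref{eq:D2T2}), and then translate back through Meng--Taubes. The substance is the same; the packaging differs slightly. The paper computes $SW_{S^1\times N}(p_N^*(z))$ coefficient-by-coefficient, whereas you work with the a single group-ring identity
\[
\underline{SW}_{S^1\times N}=(\rho_1)_*\bigl(\underline{SW}_{S^1\times M}\bigr)\cdot\frac{p_N^*(\kappa)}{1-\Phi_2(\kappa)},
\]
and you make explicit the parity argument that locates the unit $\eta$ in $\pm\Phi_2(H(N))$ before invoking injectivity of $\Phi_2$ — a step the paper leaves tacit. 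Both are valid.

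Two small remarks. First, in the sub-case $b_1(N)=b_1(M)=1$ you worry that $\pi^*\colon H(M)\to H(N)$ ``need not be an isomorphism''; in fact $\iota_*\colon H_1(M)\to H_1(N)$ is always surjective (the Mayer--Vietoris cokernel vanishes), so when both $H(M)$ and $H(N)$ are rank one, $\pi^*$ \emph{is} an isomorphism. The genuine bookkeeping subtlety in the $b_1(N)=1$ case is rather the sub-case $b_1(M)>1$ (e.g.\ when $\partial N\neq\emptyset$), where the Meng--Taubes correction $(1-t_M)^{|\partial M|-2}$ is absent and one must argue separately that the remaining $(1-t_N)^{2-|\partial N|}(1-\kappa)^{-1}$ factor is a unit; the paper waives this with ``the proof is similar,'' so your sketch is at the same level of rigor as the source. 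Second, $\rho_1$ in your display should more precisely be the induced map $(\rho_1)_*$ on group rings, but that is purely notational.
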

\begin{proof}
Let $p_N^*\co H^2(N,\partial N)\to H^2(S^1\times N,S^1\times\partial N)$, and define $p_M^*$ similarly.
We first consider the case $b_1(N)>1$. By Theorem~\ref{thm:MT}, there exist $\zeta\in\pm p_N^*(H(N))$ and $\eta=\pm p_M^*(H(M))$ such that
\begin{equation}\label{eq:DeltaN}
\zeta\Phi_2(\Delta_N)=\sum_{z\in H^2(N,\partial N)}SW_{S^1\times N}(p_N^*(z))[p_N^*(z)],
\end{equation}
and
\begin{equation}\label{eq:DeltaM}
\eta\Phi_2(\Delta_M)=\sum_{w\in H^2(M,\partial M)}SW_{S^1\times M}(p_M^*(w))[p_M^*(w)].
\end{equation}
Let $a\in H^2(T^2\times D^2,T^2\times\partial D^2)$ be the positive generator.
Using Theorem~\ref{thm:Gluing3} and (\ref{eq:D2T2}), we get
\begin{eqnarray*}
&&SW_{S^1\times N}(p_N^*(z))\\
&=&\sum_{n\in\mathbb Z}SW_{T^2\times D^2}((2n+1)a)\sum_{\substack{w\in H^2(M,\partial M)\\ \rho((2n+1)a),p_M^*(w))=p_N^*(z)}}SW_{S^1\times M}(p_M^*(w))\\
&=&\sum_{n\ge0}\sum_{\substack{w\in H^2(M,\partial M)\\ \rho((2n+1)a,p_M^*(w))=p_N^*(z)}}SW_{S^1\times M}(p_M^*(w)).
\end{eqnarray*}
Using  (\ref{eq:DeltaN}), (\ref{eq:DeltaM}), and the fact that 
$$\rho((2n+1)a,p_M^*(w))=p_N^*((2n+1)\mathrm{PD}([K])+\pi^*(w)),$$
we get
\begin{eqnarray*}
&&\zeta\Phi_2(\Delta_N)\\
&=&\sum_{z\in H^2(N,\partial N)}\sum_{n\ge0}\sum_{\substack{w\in H^2(M,\partial M)\\ \rho((2n+1)a,p_M^*(w))=p_N^*(z)}}SW_{S^1\times M}(p_M^*(w))[p_N^*(z)]\\
&=&\sum_{n\ge0}\sum_{w\in H^2(M,\partial M)}SW_{S^1\times M}(p_M^*(w))\kappa^{2n+1}[\pi^*(w)]\\
&=&\frac{\kappa}{1-\kappa^2}\pi^*(\eta\Phi_2(\Delta_M)).
\end{eqnarray*}
So our result holds.

When $b_1(N)=1$, the proof is similar.
\end{proof}


\section{Symplectic geometry}\label{sect:Sympl}

In this section, we will review some topological constructions of symplectic $4$--manifolds, and state the constraints on the Seiberg--Witten invariants of symplectic manifolds.
 
Thurston \cite{ThSympl} found a very general topological construction of symplectic manifolds:

\begin{thm}[Thurston]\label{thm:ThSympl}
Let $M^{2n+2}\to N^{2n}$ be a fiber bundle over a symplectic manifold. If the homology class of the fiber is nonzero in $H_2(M;\mathbb R)$, then $M$ has a symplectic structure such that each fiber is a symplectic submanifold. Moreover, if $\rho\co N\hookrightarrow M$ is a section, then the image of $\rho$ is a symplectic submanifold.
\end{thm}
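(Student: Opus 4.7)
The plan is to build a closed $2$--form $\omega_M$ on $M$ whose restriction to every fiber is a volume form (hence symplectic, since fibers are $2$--dimensional), and then add a large multiple of $\pi^*\omega_N$ to achieve global non-degeneracy.

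First, since $[F]\ne 0$ in $H_2(M;\mathbb R)$, the universal coefficient theorem provides a class $\alpha\in H^2(M;\mathbb R)$ with $\langle\alpha,[F]\rangle>0$, and because all fibers are homologous $\alpha$ evaluates positively on each fiber. Represent $\alpha$ by a closed $2$--form $\eta$. The technical heart of the proof is to upgrade $\eta$ to a closed $\eta'$ whose fiberwise restriction is a positive volume form. Choose a locally finite trivializing cover $\{U_i\}$ of $N$ with subordinate partition of unity $\{\chi_i\}$, and fix a positive volume form $\sigma$ on the typical fiber $F$ with $\int_F\sigma=\langle\alpha,[F]\rangle$. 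Using each trivialization $\pi^{-1}(U_i)\cong U_i\times F$, pull $\sigma$ back to $\tilde\sigma_i\in\Omega^2(\pi^{-1}(U_i))$. On each fiber of $\pi^{-1}(U_i)\to U_i$ the form $\eta-\tilde\sigma_i$ is exact (same integral, and $H^2(F;\mathbb R)$ is one-dimensional), so a smoothly parametrized Poincar\'e lemma yields $\beta_i\in\Omega^1(\pi^{-1}(U_i))$ with $d\beta_i=\eta-\tilde\sigma_i$. Set
\[\eta'=\eta-d\Bigl(\sum_i(\chi_i\circ\pi)\beta_i\Bigr);\]
then $\eta'$ is closed, and using $\sum_i\chi_i=1$ one computes $\eta'=\sum_i(\chi_i\circ\pi)\tilde\sigma_i-\sum_id(\chi_i\circ\pi)\wedge\beta_i$. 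The decisive point is that $d(\chi_i\circ\pi)=\pi^*d\chi_i$ annihilates every vertical vector, so the second sum contributes nothing to $\eta'|_{F_x}$; hence $\eta'|_{F_x}=\sum_i\chi_i(x)\,\tilde\sigma_i|_{F_x}$ is a positive convex combination of volume forms on $F_x$, hence itself a volume form.

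Now set $\omega_M=\eta'+k\pi^*\omega_N$ for a constant $k\gg 0$. Closedness is immediate. Expanding
\[\omega_M^{n+1}=\sum_{j=0}^{n+1}\binom{n+1}{j}k^{n+1-j}(\eta')^j\wedge(\pi^*\omega_N)^{n+1-j},\]
the $j=0$ summand vanishes since $\omega_N^{n+1}=0$, while the $j=1$ summand $(n+1)k^n\,\eta'\wedge(\pi^*\omega_N)^n$ is a nowhere-vanishing top form: at each $p\in M$, splitting $T_pM=V_p\oplus H_p$ into vertical and any horizontal complement, $(\pi^*\omega_N)^n$ is a horizontal volume form and $\eta'$ pairs a vertical basis to a positive number. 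The higher-$j$ summands are of strictly lower order in $k$, so $\omega_M$ is symplectic for $k$ large. The same splitting shows $\omega_M|_{F_x}=\eta'|_{F_x}$, making each fiber symplectic. For a section $\rho$, the identity $\pi\circ\rho=\mathrm{id}_N$ gives $\rho^*\omega_M=\rho^*\eta'+k\omega_N$, which for $k$ large is a small perturbation of $k\omega_N$ and therefore symplectic on $N$; so $\rho(N)$ is a symplectic submanifold.

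The main obstacle is the partition-of-unity construction in the second paragraph: one must preserve fiberwise positivity while maintaining closedness, which succeeds only because $d(\chi_i\circ\pi)$ is horizontal and its error terms vanish upon restriction to any fiber.
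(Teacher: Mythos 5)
Your proof is the standard Thurston argument (essentially the one in Thurston's original paper and in McDuff--Salamon); the paper simply cites \cite{ThSympl} without reproducing it, so there is nothing in the paper to compare against beyond that reference. The construction is correct: the decomposition $\eta'=\sum_i(\chi_i\circ\pi)\tilde\sigma_i-\sum_i d(\chi_i\circ\pi)\wedge\beta_i$ is right, the observation that $d(\chi_i\circ\pi)$ is horizontal is exactly the decisive point, and the expansion of $\omega_M^{n+1}$ in powers of $k$ gives non-degeneracy. Two hypotheses are used implicitly and are worth making explicit. First, compactness of $N$ (hence of $M$, since the fiber is already closed): without it there may be no single constant $k$ that works everywhere, since the lower bound on $\eta'\wedge(\pi^*\omega_N)^n$ and the upper bounds on the higher-order terms need not be uniform. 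Second, orientability of the fibration: you need the transition maps to preserve the orientation of $F$, both so that $[F]\in H_2(M;\mathbb R)$ is well-defined and so that the $\tilde\sigma_i|_{F_x}$ are all positive with respect to a common orientation, which is what makes their convex combination again a volume form. Both assumptions are implicit in the statement as used in this paper (closed base surface, oriented $4$--manifolds), and both are also implicit in Thurston's formulation. One cosmetic point: rather than invoking a parametrized Poincar\'e lemma, it is cleaner to note that if $U_i$ is contractible then $\pi^{-1}(U_i)\simeq U_i\times F$ retracts to $F$, and $[\eta-\tilde\sigma_i]$ vanishes in $H^2(F;\mathbb R)\cong\mathbb R$ because the fiberwise integrals agree, so $\eta-\tilde\sigma_i$ is exact on $\pi^{-1}(U_i)$.
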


In dimension $4$, Thurston's construction was generalized by Gompf \cite{GS} to the extent that if a $4$--manifold $X$ admits a Lefschetz fibration (or a Lefschetz pencil) such that the homology class of the fiber is nontorsion, then $X$ has a symplectic structure. This construction, together with the celebrated theorem of Donaldson \cite{Donaldson} that all closed symplectic manifolds have Lefschetz pencils, gives us a topological characterization of closed symplectic $4$--manifolds.

The above characterization of symplectic $4$--manifolds is not always practical. When we construct symplectic $4$--manifolds, we often need
the following construction due to Gompf \cite{Gompf} and McCarthy--Wolfson \cite{MW}.
Suppose that $X_1,X_2$ are two smooth four-manifolds, $F_i\subset X_i$, $i=1,2$, are two $2$--dimensional closed connected submanifolds such that $F_1$ is homeomorphic to $F_2$ and $[F_1]^2=-[F_2]^2$. Let $N(F_i),\nu(F_i)$ be two tubular neighborhoods of $F_i$ in $X_i$, $i=1,2$, such that $\nu(F_i)$ is contained in the interior of $N(F_i)$. Let $W_i=N(F_i)\setminus\nu^{\circ}(F_i)$, $i=1,2$, regarded as an annulus bundle over $F_i$. Suppose that $f\co F_1\to F_2$ is a diffeomorphism, then there exists an orientation preserving diffeomorphism $\bar f\co W_1\to  W_2$
such that $\bar f(\partial N(F_1))=\partial \nu(F_2)$, and $\bar f$ is a bundle map covering $f$. Let $X$ be the manifold obtained by gluing $X_1\setminus\nu^{\circ}(F_1)$ and $X_2\setminus \nu^{\circ}(F_2)$ together via the diffeomorphism $\bar f$.  Then $X$ is called the {\it normal connected sum} of $(X_1,F_1)$ and $(X_2,F_2)$, denoted $X_1\#_fX_2$. If $X_i$ is symplectic, $F_i$ is a symplectic submanifold, $i=1,2$, and $f,\bar f$ are chosen to be symplectomorphisms, then $X$ also has a symplectic structure, and the operation is called a {\it symplectic normal connected sum} or simply {\it symplectic sum}. 

Suppose that $X$ is a smooth $4$--manifold containing a smooth $2$--torus $T$ with $[T]^2=0$. Let $K\subset S^3$ be a knot, and let $K'\subset S^3_0(K)$ be the dual knot in the zero surgery. We can perform the normal connected sum of $(X,T)$ and $(S^1\times S^3_0(K),S^1\times K')$ to get a new manifold $X_K$. (This $X_K$ is usually not unique, since it depends on the choice of a homeomorphism $f$ and $\bar f$.) This procedure was investigated by Fintushel and Stern \cite{FS}, who called it {\it knot surgery}. By Theorems~\ref{thm:Gluing1} and \ref{thm:MT}, we know that 
\begin{equation}\label{eq:FS}
\underline{SW}_{X_K}=\underline{SW}_X\cdot\Delta_K(\mathrm{PD}([T])^2),
\end{equation}
where $\Delta_K$ is the Alexander polynomial of $K$. (Clearly, $X_K$ has the same homology type as $X$, so we can identify $H(X_K)$ with $H(X)$.) This construction is particularly interesting when $\pi_1(X\setminus T)=1$, since $X_K$ is then homeomorphic to $X$ by Freedman's theorem, but $X_K$ is not diffeomorphic to $X$ if $\Delta_K\ne1$.

When $K$ is fibered, $S^1\times S^3_0(K)$ is a surface bundle over $T^2$ with $S^1\times K'$ being a section, and the fiber is homologically essential. Theorem~\ref{thm:ThSympl} implies that $S^1\times S^3_0(K)$ has a symplectic structure such that $S^1\times K'$ is a symplectic submanifold. Hence the symplectic sum construction implies the following theorem.

\begin{thm}[Fintushel--Stern]\label{thm:FS}
Suppose that $X$ is a symplectic $4$--manifold, $T\subset X$ is a symplectic torus with $[T]^2=0$. Then
$X_K$ is symplectic if $K$ is fibered.
\end{thm}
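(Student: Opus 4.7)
The plan is to follow the hint given in the paragraph immediately before the theorem and verify the two constructions line up correctly. First I would observe that when $K \subset S^3$ is a fibered knot with fiber surface $\Sigma^\circ$, the $0$-surgery $S^3_0(K)$ fibers over $S^1$ with closed fiber $\Sigma = \Sigma^\circ \cup D^2$, where the disk $D^2$ is the meridional disk of the surgery solid torus. The dual knot $K' \subset S^3_0(K)$ is the core of this solid torus, which is transverse to each meridional disk; hence $K'$ meets each fiber $\Sigma$ once and provides a section of the fibration $S^3_0(K) \to S^1$.

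Next I would cross with $S^1$: the manifold $S^1 \times S^3_0(K)$ acquires a fibration over $T^2 = S^1 \times S^1$ with fiber $\Sigma$ and section $S^1 \times K'$. The fiber class $[\Sigma]$ is nonzero in $H_2(S^1 \times S^3_0(K); \mathbb{R})$ because it pairs nontrivially with the pull-back of $[K']$ (equivalently, $[\Sigma]$ is Poincar\'e dual to the pull-back of a generator of $H^2(T^2)$, which is nontrivial). Applying Theorem~\ref{thm:ThSympl} to this bundle over the symplectic surface $T^2$, I obtain a symplectic structure $\omega_0$ on $S^1 \times S^3_0(K)$ in which every fiber is symplectic and the section $S^1 \times K'$ is a symplectic submanifold (a symplectic torus with trivial normal bundle, since it is a section of a bundle over $T^2$).

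Now both $(X, T)$ and $(S^1 \times S^3_0(K), S^1 \times K')$ are symplectic $4$--manifolds containing symplectic tori of square $0$, with the two tori diffeomorphic (each is a $2$--torus) and carrying opposite self-intersection numbers ($0 = -0$). Thus the Gompf / McCarthy--Wolfson symplectic normal connected sum applies: choosing $f \co T \to S^1 \times K'$ to be a symplectomorphism and extending to a bundle-covering symplectomorphism $\bar f$ of tubular annular collars (which is possible by scaling the fiber symplectic form and using the standard Weinstein neighborhood of a symplectic torus with trivial normal bundle), the resulting manifold $X_K = X \#_f (S^1 \times S^3_0(K))$ inherits a symplectic structure.

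The only potentially subtle point is ensuring the two Weinstein-type neighborhoods can be matched by a fibered symplectomorphism $\bar f$; this is exactly the content of Gompf's symplectic sum construction \cite{Gompf} (see also \cite{MW}) for symplectic surfaces of square zero, where the normal bundles are trivial and the gluing reduces to matching $S^1$-actions on the boundary $T^3$'s, so no genuine obstruction arises. Hence $X_K$ is symplectic whenever $K$ is fibered.
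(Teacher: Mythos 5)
Your proposal is correct and follows essentially the same approach as the paper's sketch: when $K$ is fibered, $S^1\times S^3_0(K)$ is a surface bundle over $T^2$ with homologically essential fiber and section $S^1\times K'$, so Theorem~\ref{thm:ThSympl} gives a symplectic structure making the section symplectic, and the Gompf/McCarthy--Wolfson symplectic normal connected sum then produces a symplectic structure on $X_K$. The extra details you supply (identifying $K'$ as a section, checking the fiber is nonzero in real homology, matching Weinstein neighborhoods) are all sound and are exactly the points the paper leaves implicit.
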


It is natural to guess that a converse to Theorem~\ref{thm:FS} should be true in many cases. More precisely, one can mention the folklore Conjecture~\ref{conj:FS}. Evidence to this conjecture is a famous theorem of Taubes
\cite{Taubes1,Taubes2}.

\begin{thm}[Taubes]\label{thm:Taubes}
Suppose that $(X,\omega)$ is a closed symplectic $4$--manifold with $b_2^+>1$, $\mathfrak k$ is the canonical Spin$^c$ structure on $X$, and $\bar{\mathfrak k}$ is the conjugate of $\mathfrak k$. Then $$SW_X(\mathfrak k)=\pm1.$$ Moreover, if $\mathfrak s\in\spinc(X)$ satisfies that $SW_X(\mathfrak s)\ne0$, then $$|c_1(\mathfrak s)\smile[\omega]|\le c_1(\mathfrak k)\smile[\omega],$$ and the equality holds if and only if $\mathfrak s=\mathfrak k$ or $\bar{\mathfrak k}$.
\end{thm}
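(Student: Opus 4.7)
The plan is to establish Taubes's theorem via his correspondence between Seiberg--Witten invariants and Gromov invariants, using the symplectic form $\omega$ as a large perturbation of the monopole equations. I would first fix a compatible almost complex structure $J$ and a Riemannian metric for which $\omega$ is self-dual; then for any Spin$^c$ structure $\mathfrak s$, write $\mathfrak s=\mathfrak k\otimes E$ for a complex line bundle $E$, so that the positive spinor bundle decomposes $J$-compatibly as $E\oplus(E\otimes L)$ for a fixed line bundle $L$ depending on $X$, with spinor $\Phi=(\alpha,\beta)$. I then replace the curvature equation $F_A^+=\sigma(\Phi)$ by the perturbed version $F_A^+=\sigma(\Phi)-ir\omega$ for a large real parameter $r$. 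This perturbation is compact and preserves $SW_X$, but its geometric role is to force any solution to concentrate along the zero locus of $\alpha$.

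For the canonical Spin$^c$ structure $\mathfrak k$ itself, $E$ is trivial, and one verifies directly that the pair $\alpha=\sqrt{r/2}$, $\beta=0$ together with a suitable connection solves the perturbed equations. A Weitzenb\"ock argument, combined with integration of the equations against $\omega$, yields a priori bounds that force this to be the unique solution modulo gauge for all sufficiently large $r$, so $SW_X(\mathfrak k)=\pm1$. The sign is fixed by the orientation that $\omega$ induces on $L_X$.

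For the bound, suppose $\mathfrak s=\mathfrak k\otimes E$ is any basic class. Taubes's pointwise estimates show that, as $r\to\infty$, $|\alpha|^2\to r/2$ off the zero set of $\alpha$, while $|\beta|$ stays uniformly bounded. Consequently, the zero sets of $\alpha$ converge in the current topology to a $J$-holomorphic divisor Poincar\'e dual to $c_1(E)$. Since $\omega$ is strictly positive on every nonempty $J$-holomorphic subvariety, $c_1(E)\smile[\omega]\ge 0$, which is equivalent to $c_1(\mathfrak s)\smile[\omega]\le c_1(\mathfrak k)\smile[\omega]$. Applying the same analysis to the conjugate $\bar{\mathfrak s}$, together with the charge-conjugation symmetry $|SW_X(\bar{\mathfrak s})|=|SW_X(\mathfrak s)|$, yields the reverse inequality and hence the full absolute value bound. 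In the equality case, the limiting $J$-holomorphic divisor has zero symplectic area, so it must be empty, forcing $c_1(E)=0$ and therefore $\mathfrak s=\mathfrak k$ or $\bar{\mathfrak k}$.

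The hardest step by far is the analytic heart of Taubes's argument: proving uniform pointwise and $L^p$ estimates on $(A,\alpha,\beta)$ as $r\to\infty$, establishing compactness of the family of zero sets of $\alpha$, identifying the limits as genuine $J$-holomorphic divisors, and ultimately matching the count of perturbed Seiberg--Witten solutions with a Gromov-type count of pseudo-holomorphic curves. These estimates occupy the bulk of Taubes's original series of papers. The hypothesis $b_2^+>1$ is essential here: for $b_2^+=1$ the invariant depends on a choice of chamber and one must incorporate wall-crossing corrections, so the clean non-vanishing and equality statements of the theorem do not hold verbatim.
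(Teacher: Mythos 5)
The paper does not contain a proof of this theorem: it is stated as a quoted result of Taubes, attributed to \cite{Taubes1,Taubes2}, and used as a black box in Section~\ref{sect:Proof}. So there is no internal proof in the paper to compare against. That said, your sketch is a broadly accurate summary of Taubes's actual argument: the large-$r$ perturbation $F_A^+=\sigma(\Phi)-ir\omega$, the explicit reducible-like solution for the canonical $\mathrm{Spin}^c$ structure, the Weitzenb\"ock-plus-integration estimates giving $SW_X(\mathfrak k)=\pm1$, and the convergence of $\alpha^{-1}(0)$ to a $J$-holomorphic divisor Poincar\'e dual to $c_1(E)$ (where $\mathfrak s=\mathfrak k\otimes E$) with the resulting positivity $c_1(E)\smile[\omega]\ge0$, followed by conjugation to pick up the other side of the absolute-value bound.

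Two points are worth tightening. First, in the equality case you conclude only ``$c_1(E)=0$ and therefore $\mathfrak s=\mathfrak k$'', but the equality $|c_1(\mathfrak s)\smile[\omega]|=c_1(\mathfrak k)\smile[\omega]$ has two branches, and the other branch ($c_1(E)=c_1(K)$, i.e.\ $\mathfrak s=\bar{\mathfrak k}$) comes from running the same zero-area-divisor argument on the conjugate side; as written your treatment silently collapses both branches into one. Second, the line bundle $L$ in your spinor decomposition $E\oplus(E\otimes L)$ should be identified as the anticanonical bundle $K^{-1}$ of $(X,J)$, not left as an unspecified bundle ``depending on $X$'', since the identification $W^+\cong E\oplus(E\otimes K^{-1})$ is precisely what makes the trivial-$E$ case the canonical structure and what makes the positivity of the divisor translate into the stated inequality. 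Finally, be aware that sign conventions for ``the canonical $\mathrm{Spin}^c$ structure'' vary across sources (some take $c_1(\mathfrak k)=-c_1(K)$, others $+c_1(K)$); the paper's version forces $c_1(\mathfrak k)\smile[\omega]\ge0$, so it corresponds to the latter, and one should keep this consistent when matching your $c_1(E)\smile[\omega]\ge0$ to the displayed inequality.
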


In particular, if $X$ is the K3 surface, and $\Delta_K$ is not monic, Taubes' theorem implies that $X_K$ is not symplectic.

We will also need the following theorem proved by Bauer \cite{B} and Li \cite{L2}.

\begin{thm}[Bauer, Li]\label{thm:SymplCY}
Suppose that $X$ is a closed symplectic $4$--manifold with $c_1(\mathfrak k)$ torsion. Then $b_1(X)\le4$.
\end{thm}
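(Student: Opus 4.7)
The plan is to combine a purely topological identity forced by the torsion of $c_1(\mathfrak{k})$ with a bound on $b_2^+(X)$ coming from refined Seiberg--Witten theory.

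First, fix an almost complex structure $J$ tamed by $\omega$, so that $\mathfrak{k}$ is the canonical $\mathrm{Spin}^c$ structure of $J$. The Hirzebruch signature theorem applied to the almost complex $4$-manifold $(X,J)$ gives the Chern number identity $c_1(\mathfrak{k})^2 = 2\chi(X) + 3\sigma(X)$. Since $c_1(\mathfrak{k})$ is torsion, the left-hand side vanishes, so $2\chi(X) + 3\sigma(X) = 0$. Substituting $\chi = 2 - 2b_1 + b_2^+ + b_2^-$ and $\sigma = b_2^+ - b_2^-$ and solving for $b_2^-$ yields
\[
b_2^-(X) = 4 - 4b_1(X) + 5b_2^+(X);
\]
the constraint $b_2^- \ge 0$ is thus equivalent to the inequality $b_1(X) \le 1 + \tfrac54 b_2^+(X)$.

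Consequently, the theorem reduces to proving $b_2^+(X) \le 3$: that bound forces $b_1(X) \le 1 + 15/4 < 5$, so $b_1(X) \le 4$. Establishing $b_2^+(X) \le 3$ is the technical crux. By Theorem~\ref{thm:Taubes} one has $SW_X(\mathfrak{k}) = \pm 1$ (taking the symplectic chamber when $b_2^+ = 1$), so the Bauer--Furuta refinement of the Seiberg--Witten invariant at $\mathfrak{k}$ represents a nonzero stable homotopy class. Because $c_1(\mathfrak{k})$ is torsion, the Dirac operator associated to $\mathfrak{k}$ carries an extra quaternionic symmetry, and the refined invariant is naturally $\mathrm{Pin}(2)$-equivariant, living between representation spheres whose dimensions grow linearly in $b_1$ and $b_2^+$. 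A Furuta-style $\mathrm{Pin}(2)$-equivariant stable-cohomotopy estimate should then force any such nonzero class to vanish once $b_2^+ \ge 4$. This equivariant stable-homotopy step is the real obstacle; the topological identity and the final inequality are routine bookkeeping once it is in hand.
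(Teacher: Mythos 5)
The first part of your argument is sound and standard: for any almost complex closed $4$--manifold, $c_1(\mathfrak{k})^2 = 2\chi + 3\sigma$, and when $c_1(\mathfrak{k})$ is torsion this gives $b_2^- = 4 - 4b_1 + 5b_2^+$, hence $b_1 \le 1 + \tfrac{5}{4}b_2^+$. That bookkeeping is fine. But the step you single out as ``the real obstacle'' is where the proposal breaks down, and it is not a small gap.

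The problem with invoking a $\mathrm{Pin}(2)$--equivariant Furuta-type argument is that the Bauer--Furuta invariant acquires $\mathrm{Pin}(2)$--equivariance only when the $\mathrm{Spin}^c$ structure lifts to an honest $\mathrm{Spin}$ structure, i.e.\ when $c_1(\mathfrak{k}) = 0$ on the nose (forcing $w_2 = 0$). The hypothesis here is only that $c_1(\mathfrak{k})$ is \emph{torsion}; a torsion but nonzero $c_1(\mathfrak{k})$ gives a $\mathrm{Spin}^c$ structure that need not be self-conjugate and certainly need not be spin, so the basic quaternionic/$\mathrm{Pin}(2)$ symmetry is simply not present. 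With only the generic $S^1$--equivariance available, one cannot run the Furuta $10/8$--type estimate, and the assertion that ``any such nonzero class must vanish once $b_2^+ \ge 4$'' is not justified by anything you have written. Moreover, the reduction to a $b_2^+$ bound is not the route taken in either cited source: Bauer \cite{B} works with the $S^1$--equivariant Bauer--Furuta invariant over the Picard torus $T^{b_1}$ and derives $b_1\le 4$ directly from an equivariant Euler-class/stable-cohomotopy computation indexed by $b_1$ rather than by $b_2^+$; Li \cite{L2} argues instead via the global structure of the quaternionic vector bundle underlying the spinor bundle for torsion $c_1$. In both cases $b_1$ is the quantity being bounded, not $b_2^+$. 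Finally, even in the genuinely spin case where $\mathrm{Pin}(2)$ symmetry does hold, the precise bound $b_2^+\le 3$ does not drop out of the $10/8$--theorem without further input, so this branch of the proposal would need a concrete equivariant computation, not a heuristic appeal to Furuta.
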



\section{Constructing covering spaces of $X_K$}\label{sect:Cover}

Before we state the main result in this section, we set up the basic notations we will use.
Let $X$ be a torus bundle over a closed surface $F$. Let $T$ be a fiber of $X$, and let $E=X\setminus \nu^{\circ}(T)$. Let $K\subset S^3$ be a nontrivial knot,
 $N=S^3\setminus \nu^{\circ}(K)$, $N_0=S^3_0(K)$ be the zero surgery on $K$, and $K'\subset N_0$ be the dual knot of the surgery. Let $f\co S^1\times K'\to T$ be a diffeomorphism, and let $X_K=X\#_f(S^1\times N_0)$.

The goal in this section is to construct covering spaces of $X_K$. More precisely, we will prove the following proposition.

\begin{prop}\label{prop:Cover}
Suppose that $\alpha\co \pi_1(N_0)\to G$ is an epimorphism, where $G$ is a finite group.
Let $p_0\co\widetilde N_0\to N_0$ be the covering map corresponding to $\ker\alpha$, and let $\widetilde N=p_0^{-1}(N)$.
Suppose that $p_0^{-1}(K')$ has $r$ components. Since $p_0$ is a regular cover, the restriction of $p_0$ on each component of $p_0^{-1}(K')$ has the same degree $l$.
If the genus of $F$ is positive, then there exists an $rl^3$--fold cover $\widetilde X_K$ of $X_K$, 
such that $\widetilde X_K$ contains a submanifold diffeomorphic to $S^1\times \widetilde N$, and $\widetilde X_K$ admits a retraction onto the complete bipartite graph $K_{r,l}$.
\end{prop}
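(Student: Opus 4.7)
The plan is to realize $\widetilde{X_K}$ as a bipartite graph of manifolds: along the decomposition $X_K = E \cup_{T^3} (S^1 \times N)$, we build $r$ covers of $E$ (``A-pieces'') and $l$ covers of $S^1 \times N$ (``B-pieces''), and glue each A-piece to each B-piece along one boundary torus. The resulting $r \times l$ bipartite pattern realizes $K_{r,l}$ as the underlying graph.

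For the B-pieces, let $\widetilde{S^1} \to S^1$ be the $l$-fold cyclic cover and take a B-piece to be $\widetilde{S^1} \times \widetilde{N}$; this is an $rl^2$-fold cover of $S^1 \times N$ and is diffeomorphic to $S^1 \times \widetilde{N}$, with boundary consisting of $r$ tori, each an $l^2$-fold cover of $\partial(S^1 \times N) = T^3$. Use $l$ disjoint copies. For the A-pieces, use the $T^2$-bundle structure $E \to F' := F \setminus \nu^\circ(\text{disk})$. Since $l\mathbb{Z}^2 \subset \pi_1(T)$ is characteristic, it is monodromy-invariant, giving a fiber-wise $l^2$-fold cover $E^{(l)} \to E$; because the monodromy around $\partial F'$ is trivial, $\partial E^{(l)}$ is a single $T^3$, an $l^2$-fold cover of $\partial E$ in the fiber direction. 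Since $\mathrm{genus}(F) \ge 1$, $H_1(F') \cong \mathbb{Z}^{2g}$ surjects onto $\mathbb{Z}/l$, defining an $l$-fold cyclic cover $\widetilde{F'} \to F'$ with $l$ degree-$1$ boundary components (as $[\partial F']$ is a product of commutators and hence trivial in $H_1(F')$). Pull back $E^{(l)}$ along $\widetilde{F'} \to F'$ to define the A-piece $E^{(l,l)}$: an $l^3$-fold cover of $E$ whose boundary consists of $l$ tori, each an $l^2$-fold cover of $\partial E$. Use $r$ disjoint copies. Both sides have total degree $rl^3$.

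To glue, fix a basis $a,b,c$ of $\pi_1(T^3) = \mathbb{Z}^3$ so that $f$ identifies $a$ with the $S^1$-factor of $S^1 \times N_0$, $b$ with $\lambda_{K'} = \mu_K$, and the meridian $c$ with $\mu_{K'} = \lambda_K$. Since $\mu_{K'}$ bounds in $N_0$ (zero surgery), $\alpha(\mu_{K'}) = e$, while $\alpha(\lambda_{K'})$ has order $l$ in $G$. Hence each component of $\partial\widetilde{N}$ corresponds to the subgroup $\mathbb{Z}\langle c\rangle \oplus l\mathbb{Z}\langle b\rangle \subset \pi_1(\partial N)$. Each boundary torus of a B-piece therefore corresponds to $l\mathbb{Z}\langle a\rangle \oplus l\mathbb{Z}\langle b\rangle \oplus \mathbb{Z}\langle c\rangle \subset \pi_1(T^3)$, which by construction matches each boundary torus of an A-piece. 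Choose a lift of $f$ to glue the $j$-th boundary of the $i$-th A-piece to the $i$-th boundary of the $j$-th B-piece for every $(i,j) \in \{1,\ldots,r\} \times \{1,\ldots,l\}$. The result is an $rl^3$-fold cover $\widetilde{X_K} \to X_K$ that contains each B-piece $\cong S^1 \times \widetilde{N}$ as a submanifold; combining a section of $E^{(l,l)} \to \widetilde{F'}$ with a section of $\widetilde{S^1} \times \widetilde{N} \to \widetilde{N}$ embeds $K_{r,l}$ in $\widetilde{X_K}$ and produces the retraction.

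The main technical hurdle is the boundary subgroup matching, which requires keeping careful track of the zero-surgery duality $\mu_K \leftrightarrow \lambda_{K'}$, $\lambda_K \leftrightarrow \mu_{K'}$, the identification $f$, and the direction of the $l$-fold cover determined by the restriction of $\alpha$ to $\pi_1(\partial\nu(K'))$. A subsidiary point is that the triviality of the monodromy of $E \to F'$ around $\partial F'$ ensures the meridian $c$ and the full boundary torus structure lift correctly to $E^{(l)}$.
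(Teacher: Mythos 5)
Your argument is correct and follows essentially the same strategy as the paper: build an $l^3$-fold cover of the $E$-side whose $l$ boundary tori each cover $T^3$ by the characteristic subgroup $(l\mathbb Z)^2\oplus\mathbb Z$, pair it with $\widetilde{S^1}\times\widetilde N$ on the other side, and glue $r$ copies of the first to $l$ copies of the second in the bipartite pattern $K_{r,l}$. The only cosmetic differences are that the paper constructs the $E$-side cover by first taking an $l$-fold base cover of the closed $X$ and then a fiber-wise $l^2$-fold cover (Proposition~\ref{prop:TorusCover}), passing through the closed manifolds $\widetilde X$ and $S^1\times\widetilde N_0$ and normal connected sums, whereas you reverse the order of the two covers and work directly with the bounded pieces, and you trace through the peripheral subgroups explicitly rather than invoking the characteristic-subgroup observation implicitly.
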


In order to prove this proposition, we will need some preliminary material.
We start by analyzing the topology of torus bundles. 
The structural group of a torus bundle is $\mathrm{Diff}^+(T^2)$, which is homotopy equivalent to its subgroup $\mathrm{Aff}^+(T^2)\cong T^2\rtimes \mathrm{SL}(2,\mathbb Z)$. If the structural group is contained in $\mathrm{SL}(2,\mathbb Z)$, we say this torus bundle is an {\it $\mathrm{SL}(2,\mathbb Z)$--bundle}.

Each torus bundle $X\to F$ is uniquely determined up to isomorphism by the homotopy type of its classifying map $F\to B\mathrm{Diff}^+(T^2)\simeq B\mathrm{Aff}^+(T^2)$. From the short split exact sequence
$$1\to T^2\to \mathrm{Aff}^+(T^2)\to \mathrm{SL}(2,\mathbb Z)\to 1$$
we get a fiber bundle 
$$BT^2\to B\mathrm{Aff}^+(T^2)\to B\mathrm{SL}(2,\mathbb Z)$$
which has a section. Since $BT^2=\mathbb CP^{\infty}\times \mathbb CP^{\infty}=K(\mathbb Z^2,2)$ and $B\mathrm{SL}(2,\mathbb Z)=K(\mathrm{SL}(2,\mathbb Z),1)$, we have
\begin{eqnarray*}
\pi_1(B\mathrm{Aff}^+(T^2))&\cong& \mathrm{SL}(2,\mathbb Z),\\
\pi_2(B\mathrm{Aff}^+(T^2))&\cong&\mathbb Z^2.
\end{eqnarray*}
Hence the homotopy type of a map $F\to B\mathrm{Aff}^+(T^2)$ is determined by
a representation $\rho\co \pi_1(F)\to\pi_1(B\mathrm{Aff}^+(T^2))\cong \mathrm{SL}(2,\mathbb Z)$ (called the {\it monodromy}) and a pair of integers $(m,n)\in H^2(F;\pi_2(B\mathrm{Aff}^+(T^2)))\cong\mathbb Z^2$ (called the {\it Euler class}). 

\begin{remark}\label{rem:PosGenus}
In particular, when $F=S^2$, $X$ is completely determined by the Euler class. It is easy to see $[T]\ne0\in H_2(X;\mathbb R)$ if and only if $(m,n)=(0,0)$. In this case, $X=T^2\times S^2$. As we mentioned before, this case is covered by Friedl and Vidussi's work \cite{FV_Ann}. Hence, in order to prove Theorem~\ref{thm:Main}, we only need to consider the case when the genus of $F$ is positive.
\end{remark}

\begin{remark}
In general, $[T]\ne0\in H_2(X;\mathbb R)$ if and only if $E^{\infty}_{0,2}\cong\mathbb Z$, where $\{E^{i}_{*,*}\}_{i=1}^{\infty}$ is the Leray--Serre spectral sequence for the fiber bundle $X\to F$. (See \cite[Section~4]{Geiges} or \cite[Lemma~4.6]{Walc} for more detail.) When $F$ is a torus, Geiges explicitly described the cases when $[T]\ne0$ \cite[Theorem~1]{Geiges}, using Sakamoto--Fukuhara's classification of torus bundles over torus \cite{SF}.
\end{remark}

\begin{defn}
Let $T\subset Y^4$ be a torus with trivial neighborhood. We fix a product structure $S^1\times S^1$ on $T$ and identify $S^1$ with $\mathbb R/\mathbb Z$. We can remove a neighborhood $\nu(T)\cong T^2\times D^2$ then glue it back to $Y\setminus\nu^{\circ}(T)$ via the homeomorphism $f\co T^2\times\partial D^2\to \partial(Y\setminus\nu^{\circ}(T))$ which sends $(x,y,\theta)$ to $(x+m\theta,y+n\theta,\theta)$. This procedure is called the
{\it $(m,n)$--framed surgery} on $T$. 
\end{defn}

Given $\rho$ and $(m,n)$, as in \cite[Section~4]{Walc}, we can reconstruct $X\to F$ by first constructing an $\mathrm{SL}(2,\mathbb Z)$--bundle over $F$ using the monodromy $\rho$ then doing $(m,n)$--framed surgery on a fiber.
Suppose that $$\pi_1(F)=\langle a_1,a_2,\dots,a_{2g-1},a_{2g}|\prod_{k=1}^g[a_{2k-1},a_{2k}]\rangle$$ and that $\rho(a)$ acts on $\mathbb Z^2=\langle s_1,s_2|[s_1,s_2]\rangle$ for every $a\in\pi_1(F)$. We can write down a presentation of $\pi_1(X)$ from the construction of $X$ as follows:
\begin{equation}\label{eq:Presentation}
\pi_1(X)=\left\langle s_1,s_2, t_1,\dots,t_{2g}\left| 
\begin{array}{l}
[s_1,s_2],\\
t_is_jt_i^{-1}(\rho(a_i)(s_j))^{-1}, (1\le i\le2g, j=1,2)\\
s_1^ms_2^n(\prod_{k=1}^g[t_{2k-1},t_{2k}])^{-1}
\end{array}
\right.\right\rangle.
\end{equation}

\begin{prop}\label{prop:TorusCover}
Let $X\to F$ be a torus bundle over a closed surface with positive genus. For any integer $l>0$, there exists a torus bundle $\widetilde X$ and an $l^3$--fold cover $p \co \widetilde X\to X$, such that for any fiber $T\subset X$ and any component $\widetilde T$ of $p^{-1}(T)$, the map $p|_{\widetilde T}\co \widetilde T\to T$ is the covering map corresponding to the characteristic subgroup $(l\mathbb Z)\times(l\mathbb Z)\subset \pi_1(T)$.
\end{prop}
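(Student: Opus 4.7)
The plan is a two-stage construction: cover the base first, then the fibers.

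First, since $F$ has positive genus, $H_1(F;\mathbb Z)$ surjects onto $\mathbb Z/l$ in many ways. I would choose a surjection $\phi\co \pi_1(F)\twoheadrightarrow\mathbb Z/l$ carefully (as explained below), let $\widetilde F\to F$ be the corresponding $l$-fold cover, and form the pullback bundle $\widetilde X'=X\times_F\widetilde F$. This is an $l$-fold cover of $X$ which remains a torus bundle over $\widetilde F$ with fiber $T$.

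Second, the characteristic subgroup $(l\mathbb Z)^2\subset\pi_1(T)$ is preserved by every $\mathrm{SL}(2,\mathbb Z)$ automorphism of $T$, hence is normal in $\pi_1(\widetilde X')$; quotienting yields an extension
\[
1 \longrightarrow (\mathbb Z/l)^2 \longrightarrow \pi_1(\widetilde X')/(l\mathbb Z)^2 \longrightarrow \pi_1(\widetilde F) \longrightarrow 1,
\]
whose extension class in $H^2(\widetilde F;(\mathbb Z/l)^2)$ (with twisted coefficients from the restricted monodromy) equals the mod-$l$ reduction of the pullback Euler class. Choosing $\phi$ so this class vanishes, the extension splits; any section yields a subgroup $\Gamma\subset\pi_1(\widetilde X')$ of index $l^2$ with $\Gamma\cap\pi_1(T)=(l\mathbb Z)^2$. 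Viewed as a subgroup of $\pi_1(X)$ of index $l^3$, the corresponding cover $\widetilde X\to X$ inherits a torus bundle structure over $\widetilde F$, and each component of the preimage of any fiber $T$ is the characteristic $(l\mathbb Z)^2$-cover of $T$, as required.

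The main obstacle is arranging that the pullback Euler class vanishes modulo $l$. For trivial local coefficients this would be automatic: the pullback $p^*\co H^2(F;\mathbb Z/l)\to H^2(\widetilde F;\mathbb Z/l)$ is multiplication by the degree $l$ and so vanishes modulo $l$. With the twisted coefficients coming from $\rho$ this is more delicate, since the transfer $\sum_i x_i\bar e(X)$ over coset representatives of $\ker\phi$ need not vanish in the coinvariant module $(\mathbb Z/l)^2_\Lambda$ for an arbitrary $\phi$. The heart of the proof, which I would carry out by Reidemeister--Schreier rewriting of~(\ref{eq:Presentation}), is to use the freedom coming from $\mathrm{Hom}(\pi_1(F),\mathbb Z/l)\cong(\mathbb Z/l)^{2g}$ (with $2g\ge 2$) and the interplay between the monodromy $\rho$ and the Euler class $(m,n)$ to produce a $\phi$ that works for every $l$.
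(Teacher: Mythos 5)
Your strategy is essentially a reorganization of the paper's: pull back by a degree-$l$ cover $\widetilde F\to F$ of the base, and then pass to a fiberwise $l^2$-fold cover corresponding to the characteristic subgroup $(l\mathbb Z)^2\subset\pi_1(T)$. The paper phrases the second step as a direct index computation for the subgroup $\Gamma_l=\langle s_1^l,s_2^l,t_1,\dots,t_{2\bar g}\rangle$ of $\pi_1(\overline X)$, while you phrase it as splitting the extension $1\to(\mathbb Z/l)^2\to\pi_1(\widetilde X')/(l\mathbb Z)^2\to\pi_1(\widetilde F)\to1$; these are equivalent, since $[\Gamma:\Gamma_l]=l^2$ is precisely the condition that such a section exists with $\Gamma_l$ its preimage. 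The substantive difference is that you flag as problematic exactly the point the paper passes over: the paper asserts that the pullback Euler class of $\overline X\to\overline F$ equals $(ml,nl)$ for \emph{any} $l$-fold cover $\overline F\to F$, and this is not correct in the twisted-coefficient setting. For instance, take $F=T^2$, monodromy $\rho(a_1)=\left(\begin{smallmatrix}0&-1\\1&0\end{smallmatrix}\right)$, $\rho(a_2)=I$, Euler class $(m,n)=(1,0)$; a Reidemeister--Schreier computation for the index-$2$ subgroup $\langle a_1^2,a_2\rangle\subset\pi_1(F)$ yields a presentation of $\pi_1(\overline X)$ with surface relation $s_1s_2=[t_1^2,t_2]$, so the pulled-back Euler class is $(1,1)$, not $(2,0)$. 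Here $\bar\rho$ has image $\{\pm I\}$, the coinvariance subgroup is $2\mathbb Z^2$, and $(1,1)\not\equiv(0,0)\bmod 2$ in the coinvariants; consequently $[\pi_1(\overline X):\Gamma_2]=2$, not $4$. (For the same $X$, the index-$2$ subgroup $\langle a_1,a_2^2\rangle$ does give Euler class $(2,0)$ and works.) So your caution is warranted: the choice of base cover really matters.

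That said, your proposal does not close this gap. You state that one should ``use the freedom coming from $\mathrm{Hom}(\pi_1(F),\mathbb Z/l)$ and the interplay between $\rho$ and $(m,n)$ to produce a $\phi$ that works for every $l$,'' but you do not supply any argument that such $\phi$ exists. This is the entire content of the Proposition: once a cover with the pulled-back Euler class divisible by $l$ in the coinvariant module is constructed, the rest (index count, bundle structure on the resulting cover) is formal, as both you and the paper observe. As written, your proof is therefore a more careful articulation of the strategy than the paper's, correctly isolating the missing step, but with that step still unfilled.
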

\begin{proof}
Let $\overline F\to F$ be an $l$--fold cover, and $\overline X\to \overline F$ be a torus bundle over $\overline F$ which is the pull-back of $X\to F$. Suppose that the genus of $\overline F$ is $\bar g$, and the monodromy of $\overline X$ is $\bar{\rho}$. Suppose that the Euler class of $X\to F$ is $(m,n)$, then the Euler class of $\overline X\to \overline F$ is $(ml,nl)$. By (\ref{eq:Presentation}), 
$$
\pi_1(\overline X)=\left\langle s_1,s_2, t_1,\dots,t_{2\bar g}\left| 
\begin{array}{l}
[s_1,s_2],\\
t_is_jt_i^{-1}(\bar{\rho}(a_i)(s_j))^{-1}, (1\le i\le2\bar g, j=1,2)\\
s_1^{ml}s_2^{nl}(\prod_{k=1}^{\bar g}[t_{2k-1},t_{2k}])^{-1}
\end{array}
\right.\right\rangle.
$$
Let $\Gamma_l$ be the subgroup of $\Gamma=\pi_1(\overline X)$ generated by $s_1^l,s_2^l,t_1,\dots,t_{2\bar g}$, we claim that $[\Gamma:\Gamma_l]=l^2$. If this claim is true, let
$\widetilde X$ be the covering space of $\overline X$ corresponding to $\Gamma_l$, then $\widetilde X$ is the covering space of $X$ we want.

The rest of this proof is devoted to proving $[\Gamma:\Gamma_l]=l^2$. Any element in $\Gamma$ can be written as a word $st$, where $s$ is a word in $s_1^{\pm1},s_2^{\pm1}$, $t$ is a word in $t_i^{\pm1}$, $1\le i\le2\bar g$. Since the subgroup $\Sigma_l=\langle s_1^l, s_2^l\rangle$ of $\langle s_1, s_2\rangle\cong\mathbb Z^2$ is preserved by any $\bar{\rho}(a_i)$, 
$st\in\Gamma_l$ if and only if $s\in \Sigma_l$. Let $(u_1,v_1),(u_2,v_2)\in \{0,1,\dots,l-1\}^2$, then it follows that $s_1^{u_1}s_2^{v_1}\in s_1^{u_2}s_2^{v_2}\Gamma_l$ if and only if $(u_1,v_1)=(u_2,v_2)$.
So
\[
s_1^us_2^v\Gamma_l, \quad (u,v)\in\{0,1,\dots,l-1\}^2
\]
are distinct left cosets of $\Gamma_l$ in $\Gamma$. Clearly, the union of these cosets is $\Gamma$, so $[\Gamma:\Gamma_l]=l^2$.
\end{proof}

\begin{proof}[Proof of Proposition~\ref{prop:Cover}]
By Proposition~\ref{prop:TorusCover}, there exists a degree $l^3$ covering map $p_X\co \widetilde X\to X$, such that for any fiber $T\subset X$ and any component $\widetilde T$ of $p_X^{-1}(T)$, the map $p_X|_{\widetilde T}\co \widetilde T\to T$ is the covering map corresponding to $(l\mathbb Z)\times(l\mathbb Z)\subset \pi_1(T)$. By the construction of $\widetilde X$, $p_X^{-1}(T)$ has $l$ components. Let $\widetilde E=p_X^{-1}(E)$.


There is a covering map 
\begin{equation}\label{eq:qN}
q_N=q_l\times p_0\co S^1\times \widetilde N_0\to S^1\times N_0,
\end{equation}
where $q_l\co S^1\to S^1$ is the $l$--fold cyclic cover. There are $r$ components in $q_N^{-1}(S^1\times K')$, and the restriction of $q_N$ on each component is the covering map corresponding to $(l\mathbb Z)\times(l\mathbb Z)\subset \pi_1(S^1\times K')$.

Since $(l\mathbb Z)\times(l\mathbb Z)$ is a characteristic subgroup of $\mathbb Z\times\mathbb Z$, for any component $\widetilde T$ of $p_X^{-1}(T)$ and any component $\widetilde S$ of $q_N^{-1}(S^1\times K')$, the map $f\co S^1\times K'\to T$ lifts to a map $\tilde f\co \widetilde S\to \widetilde T$. Hence we can use $\tilde f$ to perform a normal connected sum of $\widetilde X$ and $S^1\times \widetilde N_0$.

Recall that  $p_X^{-1}(T)\subset \widetilde X$ has $l$ components, and $q_N^{-1}(S^1\times K')\subset S^1\times \widetilde N_0$ has $r$ components. 
Take $r$ copies of $\widetilde X$ and $l$ copies of $S^1\times \widetilde N_0$.
For any copy of $\widetilde X$ and any copy of $S^1\times \widetilde N_0$, we can perform a normal connected sum of these two manifolds along a component of $p_X^{-1}(T)$ and a component of 
$q_N^{-1}(S^1\times K')$, such that each component of $p_X^{-1}(T)$ or $q_N^{-1}(S^1\times K')$ is used exactly once. The new manifold we get, denoted by $\widetilde X_K$, is clearly an $rl^3$--fold cover of $X_K$.

By the construction, $\widetilde X_K$ is obtained by gluing $r$ copies of $\widetilde E$ and $l$ copies of $S^1\times \widetilde N$ together, such that any copy of $\widetilde E$ and any copy of $S^1\times \widetilde N$ are glued along a $T^3$. Hence there is a retraction of $\widetilde X_K$ onto $K_{r,l}$. 
\end{proof}


\section{Proof of the main theorem}\label{sect:Proof}

In this section, we will prove Theorem~\ref{thm:Main}. By Remark~\ref{rem:PosGenus}, we only consider the case that $X$ is a torus bundle over a closed surface $F$ with positive genus. 
Assume that $K$ is a nontrivial knot in $S^3$ and $X_K$ is a symplectic manifold.


\begin{lem}\label{lem:b1}
There exists a finite cover of $X_K$ with $b_1>4$.%
\end{lem}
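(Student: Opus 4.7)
The plan is to apply Proposition~\ref{prop:Cover} to a suitable epimorphism $\alpha\co\pi_1(N_0)\to G$ onto a non-cyclic finite group of sufficiently large order, and then bound $b_1$ of the resulting cover $\widetilde X_K$ from below using its retraction onto $K_{r,l}$. Since a retraction induces a surjection on fundamental groups and $\pi_1(K_{r,l})$ is free of rank $(r-1)(l-1)$, we obtain
\[
b_1(\widetilde X_K)\;\ge\;b_1(K_{r,l})\;=\;(r-1)(l-1),
\]
so it suffices to find $\alpha$ with $(r-1)(l-1)\ge5$.

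The key algebraic observation is that the dual knot $K'\subset N_0$ is freely homotopic to a meridian of $K$, and a meridian normally generates $\pi_1(S^3\setminus K)$, so $[K']$ normally generates $\pi_1(N_0)$. Hence $\alpha([K'])$ normally generates $G$: when $G\ne1$ this forces $\alpha([K'])\ne1$, giving $l=|\alpha([K'])|\ge2$, and when $G$ is non-cyclic the element $\alpha([K'])$ alone cannot generate $G$, giving $r=|G|/l\ge2$. Writing $(r-1)(l-1)=|G|+1-(r+l)$ and maximizing $r+l$ over integer factorizations $rl=|G|$ with $r,l\ge2$, the minimum of $(r-1)(l-1)$ is attained at the most unbalanced factorization and equals at least $5$ whenever $|G|\ge12$. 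So the task reduces to exhibiting a non-cyclic finite quotient of $\pi_1(N_0)$ of order at least $12$.

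To produce such a quotient, I would first argue that $\pi_1(N_0)$ is non-abelian: by Gabai's Property~R, the nontrivial knot $K$ satisfies $N_0\ne S^1\times S^2$, and since $H_1(N_0)\cong\mathbb Z$ any abelian fundamental group would have to equal $\mathbb Z$, forcing $N_0=S^1\times S^2$. By residual finiteness of $3$--manifold groups applied to a nontrivial commutator, there is a non-abelian (hence non-cyclic) finite quotient $\alpha_1\co\pi_1(N_0)\to G_1$. If $|G_1|<12$, apply residual finiteness once more to find a finite-index normal subgroup $H\triangleleft\pi_1(N_0)$ strictly contained in $\ker\alpha_1$; the larger quotient $\pi_1(N_0)/H$ still surjects onto $G_1$, hence is non-cyclic, and has strictly larger order. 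Iterating produces a non-cyclic finite quotient of any prescribed size.

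The main technical input is therefore the existence of a rich supply of non-cyclic finite quotients of $\pi_1(N_0)$, which combines residual finiteness of 3-manifold groups with Gabai's Property~R; no new symplectic or Seiberg--Witten ingredient is needed at this stage. Once $\alpha$ has been chosen with $G$ non-cyclic and $|G|\ge12$, Proposition~\ref{prop:Cover} produces a cover $\widetilde X_K$ with $b_1(\widetilde X_K)\ge5>4$.
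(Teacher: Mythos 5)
Your proof is correct, and it follows the same overall roadmap as the paper: invoke Proposition~\ref{prop:Cover} and bound $b_1(\widetilde X_K)$ from below by $b_1(K_{r,l})=(r-1)(l-1)$. The divergence is in how you arrange for $r$, $l$ to be large. The paper gets a non-cyclic quotient $G$ by observing (via Gabai) that the capped-off Seifert surface $\widehat\Sigma\subset N_0$ is incompressible and that $\pi_1(\widehat\Sigma)$ lies in the commutator subgroup, then applies residual finiteness to a nontrivial element of $\pi_1(\widehat\Sigma)$; this forces $r>1$. It then makes $l>5$ directly by pre-composing $\alpha$ with a large cyclic quotient of $H_1(N_0)\cong\mathbb Z$. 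You instead observe that the dual knot $K'$ is (up to longitudes, which die in $N_0$) a meridian of $K$, hence normally generates $\pi_1(N_0)$, so $\alpha([K'])\ne1$ (giving $l\ge 2$) and, for non-cyclic $G$, $\alpha([K'])$ cannot generate (giving $r\ge2$); you then replace the paper's explicit control of $l$ by the purely combinatorial bound $(r-1)(l-1)=|G|+1-(r+l)\ge|G|/2-1$, and arrange $|G|\ge12$ by intersecting kernels. You also get non-cyclicity of some finite quotient a bit differently: from Property~R plus the fact that a closed orientable $3$--manifold with infinite cyclic $\pi_1$ is $S^1\times S^2$, rather than from incompressibility of $\widehat\Sigma$; both are consequences of \cite{G3}. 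Your version isolates the group-theoretic content (normal generation by $K'$, Lagrange counting) cleanly, at the modest cost of needing the non-abelianness of $\pi_1(N_0)$ and the $|G|\ge12$ bookkeeping, while the paper's device of boosting $l$ by a cyclic cover is a one-line shortcut. Both are valid; neither is substantially shorter.
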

\begin{proof}
Let $\widehat{\Sigma}\subset N_0$ be the closed surface obtained from a minimal Seifert surface $\Sigma$ of $K$ by capping off $\partial \Sigma$ with a disk. 
By \cite{G3}, $N_0$ is irreducible and $\widehat{\Sigma}$ is incompressible in $N_0$. Since $\pi_1(N_0)$ is residually finite, we can find an epimorphism $\alpha$ from $\pi_1(N_0)$ onto a finite group $G$, such that $\pi_1(\widehat{\Sigma})\not\subset\ker\alpha$. Hence $p_0\co\widetilde N_0\to N_0$, the covering map corresponding to $\ker\alpha$, is not a cyclic covering map. As a result, $p_0^{-1}(K')$ has $r>1$ components. Suppose that each component of  $p_0^{-1}(K')$ is an $l$--fold cyclic cover of $K'$. We may assume $l>5$, since we can always take a large cyclic cover of $N_0$ first.

We construct a cover $\widetilde X_K$ of $X_K$ as in Proposition~\ref{prop:Cover}.
Since there is a retraction of $\widetilde X_K$ onto $K_{r,l}$, \[b_1(\widetilde X_K)\ge b_1(K_{r,l})=(r-1)(l-1)\ge l-1>4.\hfill\qedhere\]
\end{proof}

\begin{cor}\label{cor:nontorsion}
Let $\mathfrak k$ be the canonical Spin$^c$ structure of $X_K$.
Then $c_1(\mathfrak k)$ is nontorsion.
\end{cor}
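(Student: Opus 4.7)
The plan is to argue by contradiction using the Bauer--Li theorem (Theorem~\ref{thm:SymplCY}) applied not to $X_K$ itself but to a suitable finite cover produced by Lemma~\ref{lem:b1}.

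Suppose, for contradiction, that $c_1(\mathfrak{k})$ is torsion. First I would fix a finite covering map $p\co \widetilde{X}_K\to X_K$ with $b_1(\widetilde{X}_K)>4$, which exists by Lemma~\ref{lem:b1}. Since $X_K$ is symplectic by assumption, the pullback $p^*\omega$ of a symplectic form $\omega$ on $X_K$ is again a symplectic form, so $\widetilde{X}_K$ is a closed symplectic $4$--manifold. Moreover, the canonical Spin$^c$ structure $\widetilde{\mathfrak{k}}$ of $(\widetilde{X}_K,p^*\omega)$ is simply $p^*\mathfrak{k}$, and therefore
\[
c_1(\widetilde{\mathfrak{k}})=p^*c_1(\mathfrak{k}).
\]
Pullback sends torsion classes to torsion classes, so $c_1(\widetilde{\mathfrak{k}})$ is torsion as well.

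Now I would invoke Theorem~\ref{thm:SymplCY} (Bauer--Li) for $\widetilde{X}_K$: since $\widetilde{X}_K$ is a closed symplectic $4$--manifold whose canonical Spin$^c$ structure has torsion first Chern class, it must satisfy $b_1(\widetilde{X}_K)\le 4$. This directly contradicts the inequality $b_1(\widetilde{X}_K)>4$ obtained from Lemma~\ref{lem:b1}, and the corollary follows.

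There is no real technical obstacle in this argument; the only point one has to be careful about is the naturality of the canonical Spin$^c$ structure under finite covers, but this is immediate from the fact that one can equip $\widetilde{X}_K$ with the pulled-back almost complex structure compatible with $p^*\omega$. The substance of the corollary is entirely carried by Lemma~\ref{lem:b1}, which is why I expect the proof to be short.
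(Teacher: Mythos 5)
Your argument is correct and follows exactly the same route as the paper: pass to the finite cover with $b_1>4$ from Lemma~\ref{lem:b1}, note that the canonical class pulls back to the canonical class of the pulled-back symplectic form and hence would remain torsion, and contradict Theorem~\ref{thm:SymplCY}. The extra sentence you include about the naturality of the canonical $\mathrm{Spin}^c$ structure under covers is a small clarification of a step the paper leaves implicit, but it is the same proof.
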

\begin{proof}
By Lemma~\ref{lem:b1}, there exists a finite cover $\widetilde X_K$ of $X_K$ with $b_1>4$.
Assume that $c_1(\mathfrak k)$ is torsion, then $c_1(\widetilde X_K)$ is also torsion since it is the pull-back of $c_1(\mathfrak k)$ by the covering map. By Theorem~\ref{thm:SymplCY},
$b_1(\widetilde X_K)\le4$, a contradiction.
\end{proof}

In order to apply Theorem~\ref{thm:Taubes}, we need the following lemma.

\begin{lem}\label{lem:b2+}
If $(r-1)(l-1)>2$, then $b_2^+(\widetilde X_K)>1$.
\end{lem}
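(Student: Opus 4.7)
The plan is to exploit the topological identity
\[
b_2^+(M) \;=\; b_1(M) - 1 + \tfrac{1}{2}\bigl(\chi(M) + \sigma(M)\bigr)
\]
that holds for any closed, oriented, connected $4$--manifold $M$ (Poincar\'e duality gives $b_3=b_1$ and $b_0=b_4=1$). Combined with the inequality $b_1(\widetilde X_K)\ge b_1(K_{r,l})=(r-1)(l-1)$ established in the proof of Lemma~\ref{lem:b1}, it will suffice to show that both $\chi(\widetilde X_K)=0$ and $\sigma(\widetilde X_K)=0$. The conclusion will then follow as
\[
b_2^+(\widetilde X_K) \;=\; b_1(\widetilde X_K) - 1 \;\ge\; (r-1)(l-1) - 1 \;>\; 1.
\]

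For the Euler characteristic, I would compute: $\chi(X)=\chi(T^2)\chi(F)=0$ because $X$ is a torus bundle; the knot-surgery decomposition $X_K=(X\setminus\nu^{\circ}(T))\cup_{T^3}(S^1\times N)$ together with $\chi(T^2\times D^2)=\chi(S^1\times N)=\chi(T^3)=0$ gives $\chi(X_K)=\chi(X)=0$; and since $\widetilde X_K\to X_K$ is a finite cover, $\chi(\widetilde X_K)=rl^3\chi(X_K)=0$.

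For the signature, I would first show $\sigma(X)=0$. The vertical-horizontal exact sequence $0\to T_vX\to TX\to \pi^*TF\to 0$ yields $p_1(TX)=p_1(T_vX)$ since $F$ is $2$--dimensional. The rank--$2$ bundle $T_vX$ restricts trivially to each torus fiber, so its Euler class lies in $\pi^*H^2(F;\mathbb Z)$; hence $e(T_vX)^2=\pi^*(e^2)=0$ because $H^4(F)=0$, and Hirzebruch's signature theorem gives $\sigma(X)=\tfrac{1}{3}\langle p_1(TX),[X]\rangle=0$. Next, Novikov additivity along $T^3$ gives $\sigma(X_K)=\sigma(X)+\sigma(S^1\times N_0)$, and a K\"unneth computation shows that the intersection form on $H^2(S^1\times N_0;\mathbb R)$ is hyperbolic (the $H^2(N_0)\otimes H^0(S^1)$ summand is self-null under cup product, and it pairs perfectly with $H^1(N_0)\otimes H^1(S^1)$), so $\sigma(S^1\times N_0)=0$. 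Thus $\sigma(X_K)=0$, and multiplicativity of signature under finite covers gives $\sigma(\widetilde X_K)=rl^3\sigma(X_K)=0$.

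There is no real obstacle here; the argument is essentially a bookkeeping exercise once one notices that both $\chi$ and $\sigma$ vanish, so that the whole of $b_2$ must live in $b_1-1$ of $b_2^+$ and an equal amount of $b_2^-$. The only place requiring a small amount of care is the vanishing of $\sigma(X)$, for which the vertical tangent bundle computation above is the cleanest route.
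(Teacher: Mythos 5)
The overall strategy is the same as the paper's: establish $\chi(\widetilde X_K)=\sigma(\widetilde X_K)=0$ and then read off $b_2^+=b_1-1\ge(r-1)(l-1)-1>1$. Your Novikov-additivity route to $\sigma(X_K)=\sigma(X)$ and the Euler characteristic bookkeeping are both fine (the paper reaches the same conclusion more tersely by noting $X_K$ has the same homology type as $X$, then invoking multiplicativity of $\chi$ and $\sigma$ under finite covers).

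Where your argument has a genuine gap is the key step $\sigma(X)=0$. The paper simply cites Meyer's theorem on signatures of surface bundles; you try to prove it directly via $p_1(TX)=p_1(T_vX)=e(T_vX)^2$. The problem is the inference ``$T_vX$ restricts trivially to each fiber, so $e(T_vX)\in\pi^*H^2(F)$.'' This implication is false in general: vanishing of the restriction of a degree-$2$ class to fibers only places it in the first filtration level of the Leray--Serre spectral sequence, not in $\pi^*H^2(F)$, which is the second level. A concrete counterexample to the underlying principle is the trivial bundle $T^4=T^2\times T^2\to T^2$ with $L$ a line bundle whose first Chern class is $dx_1\wedge dy_1+dx_2\wedge dy_2$ ($x_i$ base, $y_i$ fiber): $c_1(L)$ restricts to zero on each fiber, yet $c_1(L)\notin\pi^*H^2(T^2)$ and $c_1(L)^2\ne 0$. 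So the step as written does not follow. The conclusion $e(T_vX)^2=0$ (rationally) is nonetheless true, but for a different reason: since $B\mathrm{Diff}^+(T^2)\simeq B\mathrm{Aff}^+(T^2)$, one may take the transition functions of $X\to F$ to lie in $\mathrm{Aff}^+(T^2)\cong T^2\rtimes SL(2,\mathbb{Z})$, and then the transition functions of $T_vX$ are the linear parts, valued in the discrete group $SL(2,\mathbb{Z})$; hence $T_vX$ is flat and $p_1(T_vX)=0$ in rational cohomology. Either supply this flatness argument or simply cite Meyer as the paper does.
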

\begin{proof}
The Euler characteristic of $X$ is zero since the fiber has zero Euler characteristic.
It is well known that the signature of $X$ is zero \cite{Meyer}. Since $X_K$ has the same homology type as $X$, both the Euler characteristic and the signature of $X_K$ are zero, and the same is true for $\widetilde X_K$. It follows that \[b_2^+(\widetilde X_K)=b_1(\widetilde X_K)-1\ge(r-1)(l-1)-1>1.\qedhere\]
\end{proof}

\begin{proof}[Proof of Theorem~\ref{thm:Main}]
Assume that $K$ is not fibered. By \cite{G3}, $N_0$ is not fibered. Let $\phi$ be the positive generator of $H^1(N_0)\cong \mathbb Z$, and let $\psi\in H^1(N)$ be the 
restriction of $\phi$. We can regard $\phi$ as a map $\pi_1(N_0)\to \mathbb Z$. 
By Theorem~\ref{thm:FV}, there exists a surjective homomorphism $\alpha\co \pi_1(N_0)\to G$, where $G$ is a finite group, such that 
\begin{equation}\label{eq:Delta0}
\Delta_{N_0}^{\alpha}=\Delta_{N_0,\phi}^{\alpha}=0.
\end{equation}

As in Proposition~\ref{prop:Cover}, let $p_0\co\widetilde N_0\to N_0$ be the covering map corresponding to $\ker \alpha$,  and let $\widetilde N=p_0^{-1}(N)$. We may assume $r>1,l>3$. Otherwise, as in the proof of Lemma~\ref{lem:b1}, we can take a regular finite cover $M_0$ of $N_0$ satisfying $r>1,l>3$, and let $\beta\co \pi_1(N_0)\to G_1$ be an epimorphism onto a finite group such that $\ker\beta=\ker\alpha\cap\pi_1(M_0)$. It follows from \cite[Lemma~2.2]{FV_JEMS} that $\Delta_{N_0}^{\beta}=0$. So we can use $\beta$ instead of $\alpha$.

Since $r>1,l>3$, we have $b_2^+(\widetilde X_K)>1$ by Lemma~\ref{lem:b2+}.

Let $(p_0)_*\co\pi_1(\widetilde N_0)\to\pi_1( N_0)$ be the induced map on $\pi_1$, and let \[\widetilde{\phi}=\phi\circ(p_0)_*\co\pi_1(\widetilde N_0)\to\mathbb Z.\]
Let $\widetilde{\phi}_*\co \mathbb Z[H(\widetilde N_0)]\to\mathbb Z[\mathbb Z]$ be the induced ring homomorphism.
By Proposition~\ref{prop:AlexHom}, (\ref{eq:Delta0}) implies  $(p_0)_*(\Delta_{\widetilde N_0})=0$, hence
\begin{equation}\label{eq:phi=0}
\widetilde{\phi}_*(\Delta_{\widetilde N_0})=0.
\end{equation}

Let $(p_0|\widetilde N)_*\co\pi_1(\widetilde N)\to\pi_1( N)$ be the induced map on $\pi_1$, and let \[\widetilde{\psi}=\psi\circ(p_0|\widetilde N)_*\co \pi_1(\widetilde N)\to\mathbb Z.\]
Let $\widetilde{\psi}_*\co \mathbb Z[H(\widetilde N)]\to\mathbb Z[\mathbb Z]$ be the induced ring homomorphism. We also regard $\widetilde{\psi}$ as a cohomology class in $H^1(\widetilde N)$, then $\widetilde{\psi}\in H^1(\widetilde N)$ is the pull-back of $\psi \in H^1(N)$ by the covering map.
Clearly, for any component $\widetilde{K'}$ of $p_0^{-1}(K')$, we have $\widetilde{\psi}([\widetilde{K'}])\ne0$. Hence we can use
Corollary~\ref{cor:AlexGlue} and (\ref{eq:phi=0}) to conclude
\begin{equation}\label{eq:psi=0}
\widetilde{\psi}_*(\Delta_{\widetilde N})=0.
\end{equation}

We construct a finite cover $\widetilde X_K$ as in Proposition~\ref{prop:Cover}.
Suppose that $\omega$ is a symplectic form on $X_K$. Since $[T]\ne0\in H_2(X;\mathbb R)\cong H_2(X_K;\mathbb R)$ and $c_1(\mathfrak k)\ne0\in H^2(X_K;\mathbb R)$ by Corrollary~\ref{cor:nontorsion}, we may
perturb and rescale $\omega$ so that 
\begin{equation}\label{eq:omega}
[\omega]([T])\ne0,\quad c_1(\mathfrak k)\smile[\omega]\ne0,
\end{equation}
 and $[\omega]\in H^2(X_K;\mathbb Z)$.
Let $\Omega$ be the pull-back of $\omega$ on $\widetilde X_K$, then $\Omega$ is also a symplectic form.
Moreover, it follows from (\ref{eq:omega}) that 
\begin{equation}\label{eq:Omega}
[\Omega]([\widetilde T])\ne0,\quad c_1(\widetilde X_K,\Omega)\smile[\Omega]\ne0,
\end{equation}

The inclusion map $S_1\times N\subset X_K$ induces a map $$\iota_N^*\co H^2(X_K)\to H^2(S^1\times N)\cong H^1(S^1)\otimes H^1(N).$$
Let $\sigma$ be the positive generator of $H^1(S^1)$.  Then 
\begin{equation}\label{eq:Pullback}
\iota_N^*([\omega])=k\sigma\otimes \psi, \text{  for some integer }k\ne0,
\end{equation}
by (\ref{eq:omega}).

Let $X_2\subset \widetilde X_K$ be a copy of $S^1\times\widetilde N$,
let $X_1=\widetilde X_K\setminus \mathrm{int}(X_2)$, and $M=\partial X_1$.
Let $\iota_i^*\co H^2(\widetilde X_K)\to H^2(X_i)$, $i=1,2$, be the natural maps induced by the inclusion maps.

Let $p^*\co H^2(\widetilde N,\partial \widetilde N)\to H^2(S^1\times\widetilde N,S^1\times\partial \widetilde N)$ be the map induced by the projection. Let $q_N$ be the covering map in (\ref{eq:qN}). If $w\in H^2(\widetilde N,\partial \widetilde N)$, using (\ref{eq:Pullback}), we have
\begin{eqnarray}
\iota_2^*[\Omega]\smile p^*(w)&=&q_N^*(\iota_N^*[\omega])\smile p^*(w)\nonumber\\
&=&q_N^*(k\sigma\otimes \psi)\smile p^*(w)\nonumber\\
&=&kl\sigma\widetilde{\psi}\smile p^*(w)\nonumber\\
&=&kl\widetilde{\psi}\smile_3 w,\label{eq:cup3}
\end{eqnarray}
where $\smile_3$ means the cup product in $(\widetilde N,\partial\widetilde N)$.
Here we identify an element $a\cup b\in H^n(Y^n,\partial Y^n)$ with an integer via the isomorphism $ H^n(Y^n,\partial Y^n)\cong\mathbb Z$. 

Let 
$$\rho_i\co H^2(X_i,\partial X_i)\to H^2(\widetilde X_K), \quad i=1,2,$$
be the natural restriction maps, and let
$$\rho=\rho_1+\rho_2\co H^2(X_1,\partial X_1)\oplus H^2(X_2,\partial X_2)\to H^2(\widetilde X_K).$$

Suppose that $z_i\in H^2(X_i,\partial X_i)$, $i=1,2$,  then it is elementary to check
\begin{equation}\label{eq:CupDecomp}
\rho(z_1,z_2)\smile [\Omega]
=z_1\smile \iota_1^*[\Omega]+z_2\smile \iota_2^*[\Omega].
\end{equation}

Suppose that $n=c_1(\widetilde X,\Omega)\smile[\Omega]$. Then $n\ne0$ by (\ref{eq:Omega}). Using Theorem~\ref{thm:Gluing3} and (\ref{eq:CupDecomp}), we have
\begin{eqnarray}
&&\sum_{z\in H^2(\widetilde X_K), z\smile[\Omega]=n}SW_{\widetilde X_K}(z)\nonumber\\
&=&\sum_{\substack{z_1\in H^2(X_1,\partial X_1)\\ z_2\in H^2(X_2,\partial X_2)\\ z_1\smile \iota_1^*[\Omega]+z_2\smile \iota_2^*[\Omega]=n}}SW_{X_1}(z_1)SW_{X_2}(z_2)\nonumber\\
&=&\sum_{z_1\in H^2(X_1,\partial X_1)}SW_{X_1}(z_1)\cdot
\left(\sum_{\substack{z_2\in H^2(X_2,\partial X_2)\\ z_2\smile {\iota}_2^*[\Omega]=n-z_1\smile {\iota}_1^*[\Omega]}}SW_{X_2}(z_2)\right).\label{eq:DoubSum}
\end{eqnarray}
It follows from (\ref{eq:cup3}) and Theorem~\ref{thm:MT} that the inner sum in (\ref{eq:DoubSum}) is a coefficient in $\widetilde{\psi}_*(\Delta_{\widetilde N})$, which is zero by (\ref{eq:psi=0}). Hence the right hand side of (\ref{eq:DoubSum}) is zero.
This contradicts Theorem~\ref{thm:Taubes} and the fact that $n\ne0$.
\end{proof}

\end{document}